 \newtheorem{thm}{Theorem}[section]
 \newtheorem{cor}[thm]{Corollary}
 \newtheorem{prop}[thm]{Proposition}
 \theoremstyle{definition}
 \theoremstyle{remark}
 \numberwithin{equation}{section}
 \newcommand{\Real}{\mathbb{R}}
 \newcommand{\hor}{\text{hor}}
 \newcommand{\tr}{\textbf{tr}}
 \newcommand{\di}{\textbf{div}}
 \newcommand{\vol}{\textbf{vol}}
 \newcommand{\ric}{\textbf{Rc}}
 \newcommand{\Rm}{\textbf{Rm}}
\newcommand{\e}{\epsilon}
\newcommand{\LD}{\mathcal L}
\newcommand{\cA}{\mathcal A}
\newcommand{\cF}{\mathcal F}
\newcommand{\cJ}{\mathcal J}
\begin{document}

\title[On MCP without Ricci curvature lower bound]{On measure contraction property without Ricci curvature lower bound}

\author{Paul W.Y. Lee}
\email{wylee@math.cuhk.edu.hk}
\address{Room 216, Lady Shaw Building, The Chinese University of Hong Kong, Shatin, Hong Kong}

\date{\today}

\maketitle

\begin{abstract}
Measure contraction properties $MCP(K,N)$ are synthetic Ricci curvature lower bounds for metric measure spaces which do not necessarily have smooth structures. It is known that if a Riemannian manifold has dimension $N$, then $MCP(K,N)$ is equivalent to Ricci curvature bounded below by $K$. On the other hand, it was observed in \cite{Ri} that there is a family of left invariant metrics on the three dimensional Heisenberg group for which the Ricci curvature is not bounded below. Though this family of metric spaces equipped with the Harr measure satisfy $MCP(0,5)$.

In this paper, we give sufficient conditions for a $2n+1$ dimensional weakly Sasakian manifold to satisfy $MCP(0,2n+3)$. This extends the above mentioned result on the Heisenberg group in \cite{Ri}.
\end{abstract}


\section{Introduction}

In the past decade, there is a surge of interest in studying synthetic Ricci curvature lower bounds. These are reformulations of Ricci curvature lower bounds on Riemannian manifolds without using the underlying smooth structure. As a consequence, they can be used as the definitions of Ricci curvature lower bounds on more general metric measure spaces.

There are quite a few synthetic Ricci curvature lower bounds defined via different approaches. This includes the one in \cite{BaEm} via the formalism of Dirichlet forms, the one in \cite{LoVi1,St1,St2} via the theory of optimal transportation, and the one in \cite{Ol} via coupling of Markov chains.

In this paper, we consider another synthetic Ricci curvature lower bound, called measure contraction property $MCP(K,N)$, discussed in \cite{St2,Oh1}.  Here, we recall that a length space $(M,d)$ equipped with a measure $\mu$ satisfies $MCP(0,N)$ if, for each Borel set $U_0$ and each point $x_0$ in $M$, the contraction $U_t$ of $U_0$ along geodesics ending at $x_0$ satisfies
\[
\mu(U_t)\geq (1-t)^N\mu(U_0).
\]
The condition $MCP(K,N)$ is defined in a similar way. For Riemannian manifolds with dimension $N$, the condition $MCP(K,N)$ is equivalent to the Ricci curvature bounded below by $K$. On the other hand, it was observed in \cite{Ri} that there is a family of left invariant metrics on the three dimensional Heisenberg group for which the Ricci curvature is not bounded below. Though this family of metric spaces equipped with the Harr measure satisfy $MCP(0,5)$.

In this paper, we give sufficient conditions on a family of Riemannian manifolds, called weakly Sasakian manifolds, of dimension $2n+1$ which guarantee that the condition $MCP(0,2n+3)$ holds. More precisely, let $M$ be a contact manifold of dimension $2n+1$ equipped a contact form $\eta$ and a Reeb field $V$. Let $\cJ$ be a $(1,1)$-tensor which is almost complex on the distribution $\ker\eta$ and $\cJ V=0$. The Riemannian metric $\left<\cdot,\cdot\right>$ is defined by the 2-form $d\eta$ and the tensor $\cJ$ on $\ker\eta$. Outside $\ker\eta$, the Riemannian metric is defined by $|V|=1$. On such a manifold, one can define a convenient connection, called the Tanaka-Webster connection. The corresponding curvature tensor, denoted by $\overline\Rm$, is called the Tanaka-Webster curvature (see Section \ref{wkSa} for the detail).

The geometric structure $(M,\cJ,V,\eta,\left<\cdot,\cdot\right>)$ is a Sasakian manifold if additional compatibility and integrability conditions are satisfied (see Section \ref{wkSa} for the precise definition). We call $(M,\cJ,V,\eta,\left<\cdot,\cdot\right>)$ a weakly Sasakian manifold if all the above mentioned conditions except $|V|=1$ are satisfied. We show that the Ricci curvature $\ric$ blows up in some directions as $|V|=\e\to\infty$. On the other hand, we show that

\begin{thm}\label{main}
Let $(M,\cJ,V,\eta,\left<\cdot,\cdot\right>)$ be a weakly Sasakian manifold of dimension $2n+1$ such that $|V|$ is constant. Assume that the Tanaka-Webster curvature $\overline\Rm$ satisfies
\begin{enumerate}
\item $\left<\overline\Rm(Jv,v)v,Jv\right>\geq 0$,
\item $\sum_{i=1}^{2n-2}\left<\overline\Rm(w_i,v)v,w_i\right>\geq 0$,
\end{enumerate}
for any orthonormal basis $\{v,Jv,w_1,...,w_{2n-2}\}$ of $\ker\eta$. Then the metric measure space $(M,d,\vol)$ satisfies $MCP(0,2n+3)$, where $d$ and $\vol$ are, respectively, the Riemannian distance and the Riemannian volume of $\left<\cdot,\cdot\right>$.
\end{thm}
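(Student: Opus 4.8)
The plan is to reduce $MCP(0,2n+3)$ to a one–dimensional comparison for the Riemannian volume density along geodesics ending at $x_0$, and then to control that density by analyzing the Jacobi equation in a frame adapted to the Tanaka-Webster connection, where the hypotheses (1) and (2) enter as sign conditions on the curvatures that actually govern the geodesic flow.

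First I would fix $x_0$ and discard its cut locus, which has measure zero and hence is irrelevant to the volume comparison. Away from the cut locus every point is $x=\exp_{x_0}(rv)$ with $|v|=1$, and the contraction $U_t$ is the image of $U_0$ under $\Phi_t(x)=\exp_{x_0}((1-t)rv)$. Writing the Riemannian volume in geodesic polar coordinates around $x_0$ as $d\vol=\theta(s,v)\,ds\,d\sigma(v)$, the rescaling $s\mapsto(1-t)s$ gives
\[
\vol(U_t)=\int_{U_0}(1-t)\,\frac{\theta((1-t)r,v)}{\theta(r,v)}\,d\vol .
\]
Thus $MCP(0,2n+3)$ follows once one shows, along each minimizing geodesic, that $\theta((1-t)r,v)\geq(1-t)^{2n+2}\theta(r,v)$, i.e.\ that $s\mapsto\theta(s,v)/s^{2n+2}$ is non-increasing up to the first conjugate point. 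Since $\theta=\det B$ for the matrix $B$ of normal Jacobi fields normalized by $B(0)=0$, $B'(0)=\mathrm{Id}$, and $\theta\sim s^{2n}$ near $0$, this reduces the theorem to the scalar differential inequality $\operatorname{tr}S(s)\leq\frac{2n+2}{s}$, where $S=B'B^{-1}$ and $\operatorname{tr}S=(\log\theta)'$.

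Next I would write the Jacobi equation along $\gamma$ in a weakly Sasakian frame. Using that $\langle\dot\gamma,V\rangle$ is conserved along geodesics (a consequence of the weakly Sasakian structure and $|V|$ constant), the velocity $\dot\gamma$ keeps a fixed decomposition into a Reeb part and a horizontal unit direction $v$, so the $2n$-dimensional normal bundle splits into the $(2n-2)$-plane spanned by $\{w_1,\dots,w_{2n-2}\}$ and the two-plane coupling $Jv$ with the Reeb direction. Rewriting the Levi-Civita curvature operator $u\mapsto\Rm(u,\dot\gamma)\dot\gamma$ through the Tanaka-Webster connection of Section \ref{wkSa}, the curvature terms in these blocks become exactly $\langle\overline\Rm(Jv,v)v,Jv\rangle$ and $\sum_i\langle\overline\Rm(w_i,v)v,w_i\rangle$, while the torsion of the Tanaka-Webster connection contributes first-order terms to the Jacobi equation proportional to $d\eta$. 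Consequently $S$ solves a Riccati equation $S'+S^2+R+(\text{first-order coupling from the torsion})=0$ that is block diagonal except for the contact coupling in the $Jv$-Reeb plane.

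Finally I would establish $\operatorname{tr}S\leq\frac{2n+2}{s}$. The naive estimate $\operatorname{tr}(S^2)\geq(\operatorname{tr}S)^2/(2n)$ together with $\operatorname{tr}R=\ric(\dot\gamma)$ only yields $MCP(0,2n+1)$ and is useless here, since $\ric$ is unbounded below; the point is that the contact first-order terms force the coupled $Jv$-Reeb block to contribute to $\operatorname{tr}(S^2)$ as if two extra dimensions were present, which is precisely the gain from $2n$ to $2n+2$. Concretely I would compare each block with the model obtained by setting the curvatures in (1) and (2) to zero, namely the flat weakly Sasakian (Heisenberg) model, whose density is explicitly computable and already realizes $MCP(0,2n+3)$, and then invoke a Riccati comparison theorem: hypothesis (2) keeps the $\{w_i\}$-block no more spread than in the model, and hypothesis (1) controls the holomorphic sectional curvature governing the coupled block. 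The main obstacle is exactly this coupled $Jv$-Reeb block: because of the first-order $d\eta$ terms it cannot be treated by a scalar Ricci comparison but requires a genuine matrix Riccati comparison against the explicit Heisenberg solution, and one must verify that the first-order torsion terms preserve, rather than destroy, the monotonicity of $\theta(s)/s^{2n+2}$. Once this block is controlled, summing the block contributions yields $\operatorname{tr}S\leq(2n+2)/s$ and hence $MCP(0,2n+3)$.
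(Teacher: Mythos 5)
Your proposal follows essentially the same route as the paper: an adapted moving frame along the geodesic, a matrix Riccati equation with first-order contact coupling terms, a block split into the $(2n-2)$-dimensional part and the coupled $Jv$--Reeb part, and a matrix Riccati comparison against the explicit Heisenberg model, with hypotheses (1) and (2) entering as positivity of the corresponding Tanaka--Webster curvature blocks. The one step you flag but do not execute---verifying that the coupled contact block of the model really contributes as if two extra dimensions were present---is precisely the explicit computation the paper performs in Theorem \ref{MCPH}, namely the closed-form solution of the model Riccati equation and the bound $\operatorname{tr} F_1(1-t)\geq -5/t$.
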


Note that the curvature conditions in Theorem \ref{main} are satisfied by the Heisenberg group. In fact, all inequalities become equalities in this case.

Note also that, under the same assumptions as in Theorem \ref{main}, it was shown in \cite{LeLiZe} that $(M,d_{CC},\vol_P)$ satisfies $MCP(0,2n+3)$, where $d_{CC}$ is the Carnot-Caratheordory distance and $\vol_P$ is the Popp measure (see also \cite{Ju,AgLe} for the earlier results).

Metric measure spaces satisfying measure contraction property\\ $MCP(0,N)$, in particular the ones defined in Theorem \ref{main}, satisfy doubling property and Poincar\'e inequality.

\begin{cor}(Doubling)\label{doubling}
Assume that the conditions in Theorem \ref{main} hold. Then there is a constant $C>0$ such that
\[
\vol(B_{x}(2R))\leq C\vol(B_{x}(R))
\]
for all $x$ in $M$ and all $R>0$, where $B_x(R)$ is the ball of radius $R$ centered at $x$.
\end{cor}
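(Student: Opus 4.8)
The plan is to derive the doubling property directly from the conclusion $MCP(0,N)$ of Theorem \ref{main}, with $N = 2n+3$, via the standard device of contracting a large ball toward its own center so that its image sits inside the smaller ball. First I would fix a point $x$ in $M$ and a radius $R>0$, and apply the measure contraction property with $U_0 = B_x(2R)$ and contraction point $x_0 = x$. Along the minimizing geodesic joining any $y \in U_0$ to $x$, the contraction $U_t$ sends $y$ to the point whose distance to $x$ equals $(1-t)\,d(x,y)$. Since $d(x,y) \leq 2R$ for $y \in U_0$, this image point lies within distance $2R(1-t)$ of $x$, so $U_t \subseteq B_x\!\left(2R(1-t)\right)$.

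Choosing $t = 1/2$ then gives $U_{1/2} \subseteq B_x(R)$, whence $\vol(U_{1/2}) \leq \vol(B_x(R))$ by monotonicity of the measure. On the other hand, the $MCP(0,N)$ inequality furnished by Theorem \ref{main} yields $\vol(U_{1/2}) \geq (1/2)^N \vol(U_0) = 2^{-N}\,\vol(B_x(2R))$. Chaining the two estimates produces $2^{-N}\,\vol(B_x(2R)) \leq \vol(B_x(R))$, i.e. $\vol(B_x(2R)) \leq 2^N \vol(B_x(R))$, which is exactly the asserted doubling inequality with the explicit constant $C = 2^{2n+3}$, uniform in both $x$ and $R$.

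The one point that needs care is the well-definedness of the contraction map underlying the inclusion $U_t \subseteq B_x(2R(1-t))$: for each $y$ one must select a minimizing geodesic from $y$ to $x$, and such a geodesic is unique precisely for $y$ outside the cut locus of $x$. Since the cut locus of a point has $\vol$-measure zero on a Riemannian manifold, the contraction is defined $\vol$-almost everywhere on $U_0$, and this is all that the measure inequality requires. I expect this measure-theoretic bookkeeping, rather than any genuine geometric difficulty, to be the only obstacle; it is handled in exactly the same manner as in the classical deduction of the doubling property from a Bishop--Gromov type volume comparison under a Ricci lower bound, and the weakly Sasakian structure plays no further role once Theorem \ref{main} is in hand.
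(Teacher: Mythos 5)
Your argument is correct and is exactly the standard deduction the paper has in mind: the paper offers no details beyond the remark that the corollary ``easily follows from the measure contraction property,'' and your contraction of $B_x(2R)$ toward its center at $t=1/2$, combined with the $MCP(0,2n+3)$ inequality, supplies precisely that deduction with the explicit constant $C=2^{2n+3}$. Nothing further is needed.
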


\begin{cor}(Poincar\'e inequality)\label{Poincare}
Assume that the conditions in Theorem \ref{main} hold.  Then, for each $p>1$, there is a constant $C>0$ such that
\[
\begin{split}
&\int_{B_{x}(R)}\left|f(x)-\frac{1}{\vol({B_{x}(R)})}\int_{B_{x}(R)}f(x)d\vol(x)\right|^pd\vol(x) \\
&\leq CR^p\int_{B_{x}(R)}|\nabla f|^pd\vol(x).
\end{split}
\]
\end{cor}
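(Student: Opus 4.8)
The plan is to derive the Poincar\'e inequality directly from the measure contraction property $MCP(0,2n+3)$ furnished by Theorem~\ref{main}; that $MCP(0,N)$ forces a (weak) local Poincar\'e inequality is a general fact about these spaces (see \cite{St2,Oh1}), and I would reproduce its mechanism in the present Riemannian setting. Two preliminary observations make the machinery run. First, since $(M,\left<\cdot,\cdot\right>)$ is a genuine Riemannian manifold, the cut locus of any point has measure zero, so for $\vol$-a.e.\ pair $(y,z)$ there is a unique minimizing constant-speed geodesic $\gamma_{yz}\colon[0,1]\to M$ with $\gamma_{yz}(0)=y$ and $\gamma_{yz}(1)=z$, and the geodesic interpolation maps are defined $\vol$-a.e. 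Second, $MCP(0,N)$ is equivalent to a pointwise Jacobian bound: the geodesic contraction toward a fixed base point that moves a point a fraction $s$ of the way in has Jacobian at least $s^{N}$. The whole argument converts this Jacobian bound into a change-of-variables estimate.

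For a ball $B=B_x(R)$ and the average $f_B$ I would first use convexity to reduce to a double integral,
\[
\int_B|f-f_B|^p\,d\vol\le\frac{1}{\vol(B)}\int_B\int_B|f(y)-f(z)|^p\,d\vol(y)\,d\vol(z),
\]
and then bound the integrand by integrating $|\nabla f|$ along $\gamma_{yz}$: since $|\dot\gamma_{yz}|=d(y,z)\le 2R$, Jensen's inequality gives
\[
|f(y)-f(z)|^p\le (2R)^p\int_0^1|\nabla f(\gamma_{yz}(s))|^p\,ds.
\]
Everything then reduces to estimating the occupation integral $\int_B\int_B\int_0^1|\nabla f(\gamma_{yz}(s))|^p\,ds\,d\vol(y)\,d\vol(z)$.

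The heart of the proof, and the step I expect to be the main obstacle, is this occupation integral, because a naive change of variables produces a factor $s^{-N}$ or $(1-s)^{-N}$ that is non-integrable near the endpoints, reflecting the concentration of geodesics at $y$ and at $z$. I would resolve this by splitting the $s$-integral at $1/2$. On $s\in[0,1/2]$ I fix $z$ and regard $y\mapsto\gamma_{yz}(s)$ as the contraction toward $z$ with parameter $1-s\ge 1/2$; the Jacobian bound and the change of variables then give, for fixed $z$ and $s$,
\[
\int_B|\nabla f(\gamma_{yz}(s))|^p\,d\vol(y)\le (1-s)^{-N}\int_{B_x(3R)}|\nabla f|^p\,d\vol\le 2^{N}\int_{B_x(3R)}|\nabla f|^p\,d\vol,
\]
where $B_x(3R)$ arises because a minimizing geodesic between two points of $B$ stays within $B_x(3R)$. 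The complementary range $s\in[1/2,1]$ is handled symmetrically by fixing $y$ instead, again producing the bounded factor $s^{-N}\le 2^{N}$; splitting at the midpoint is precisely what keeps the integration variable away from the singular endpoints.

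Assembling these estimates and integrating the remaining variable over $B$ yields a weak Poincar\'e inequality
\[
\int_{B_x(R)}|f-f_B|^p\,d\vol\le C\,R^p\int_{B_x(3R)}|\nabla f|^p\,d\vol,
\]
with $C=2^{\,p+2n+3}$. To pass to the stated inequality, in which the gradient is integrated over the same ball $B_x(R)$, I would invoke the self-improvement of weak Poincar\'e inequalities on doubling length spaces (Haj\l asz--Koskela): the doubling property from Corollary~\ref{doubling} together with the geodesic structure of $M$ allows the dilation factor to be removed by a standard chaining argument, giving the inequality with $B_x(R)$ on both sides and a constant $C=C(n,p)$ as claimed.
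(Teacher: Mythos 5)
Your proposal is correct, but it takes a genuinely different route from the paper: the paper offers no proof of this corollary at all, instead deferring to \cite{LeLiZe}, whose argument rests on Jerison's Poincar\'e inequality for H\"ormander vector fields \cite{Je}, whereas you derive the inequality directly from the measure contraction property $MCP(0,2n+3)$ supplied by Theorem \ref{main}. Your mechanism --- Jensen's inequality to reduce to a double integral, the fundamental theorem of calculus along $\gamma_{yz}$, and the occupation-integral estimate with the $s$-integral split at $1/2$ so that the Jacobian factor $(1-s)^{-N}$ (resp.\ $s^{-N}$) stays below $2^{N}$ --- is the standard segment-inequality argument for MCP spaces, and the steps check out: the cut locus is $\vol$-null so the interpolation maps are a.e.\ defined and injective, the defining inequality $\vol(U_t)\geq(1-t)^{N}\vol(U_0)$ does yield the a.e.\ Jacobian lower bound you use, and minimizing geodesics between points of $B_x(R)$ stay inside $B_x(3R)$. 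The final passage from the weak inequality (gradient integrated over $B_x(3R)$) to the stated one (over $B_x(R)$) via Haj\l asz--Koskela self-improvement is legitimate, since $M$ is a geodesic space and Corollary \ref{doubling} supplies doubling with no circularity. What your approach buys is a self-contained proof with an explicit constant for the weak form, valid for any metric measure space satisfying $MCP(0,N)$ whose cut loci are negligible; what the paper's citation buys is brevity. Two small points to tidy: state that $f$ is locally Lipschitz (or in the relevant Sobolev class) so that $|f(y)-f(z)|\leq d(y,z)\int_0^1|\nabla f(\gamma_{yz}(s))|\,ds$ holds for a.e.\ pair, and note that for $s\in[0,1/2]$ the map $y\mapsto\gamma_{yz}(s)$ is the contraction toward $z$ at time $s$, whose MCP Jacobian bound is $(1-s)^{N}$, rather than a contraction ``with parameter $1-s$.''
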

Here Corollary \ref{doubling} easily follows from the measure contraction property. For a proof of Corollary \ref{Poincare} which relies on a result in \cite{Je}, see \cite{LeLiZe}.

With the doubling property and the Poincar\'e inequality, numerous results follow. For instance, it follows from the above corollaries and the results in \cite{Mo0, CoHoSa} that

\begin{cor}(Harnack inequality)
Assume that the conditions in Theorem \ref{main} hold.  Then, for each $p>1$, there is a constant $C>0$ such that any positive solution to the $p$-Laplace equation $\di_\vol(|\nabla f|^{p-2}\nabla f)=0$ on $B_x(R)$ satisfies
\[
\sup_{B_{x}(R/2)}f\leq C\inf_{B_{x}(R/2)}f.
\]
\end{cor}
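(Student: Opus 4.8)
The plan is to invoke the general Moser-type Harnack principle for quasilinear equations on metric measure spaces, whose hypotheses are precisely a doubling condition together with a Poincar\'e inequality. The two preceding corollaries supply exactly these two ingredients, so the corollary reduces to checking that our setting fits the abstract framework of \cite{Mo0, CoHoSa} and that the constants obtained there are scale invariant.

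First I would record that, under the hypotheses of Theorem \ref{main}, the space $(M, d, \vol)$ satisfies $MCP(0, 2n+3)$, and hence by Corollary \ref{doubling} the volume measure is doubling while by Corollary \ref{Poincare} it supports the $(1,p)$-Poincar\'e inequality for the given $p>1$. These are the standing assumptions under which the De Giorgi--Nash--Moser iteration scheme has been carried out in the metric setting; the outcome of that scheme is exactly the scale-invariant Harnack inequality for positive $p$-harmonic functions.

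Second, I would match the analytic objects so that the abstract results apply verbatim. On a smooth Riemannian manifold the minimal $p$-weak upper gradient of a function coincides $\vol$-almost everywhere with $|\nabla f|$, so the Newtonian Sobolev space underlying \cite{Mo0, CoHoSa} agrees with the classical $W^{1,p}$, and a positive solution of $\di_\vol(|\nabla f|^{p-2}\nabla f)=0$ on $B_x(R)$ is precisely a positive $p$-harmonic function, i.e.\ a local minimizer of the $p$-energy, in the sense used there. The Harnack inequality of \cite{Mo0, CoHoSa} then yields a constant $C$ with $\sup_{B_x(R/2)} f \le C \inf_{B_x(R/2)} f$.

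The only point requiring care, and the main obstacle, is the uniformity of $C$ in the center $x$ and the radius $R$. Since the doubling constant from Corollary \ref{doubling} and the Poincar\'e constant from Corollary \ref{Poincare} are independent of $x$ and $R$, the Harnack constant produced by the iteration depends only on $n$, $p$, and these two structural constants, and is therefore independent of $x$ and $R$. This delivers the scale-invariant statement and completes the proof.
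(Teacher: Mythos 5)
Your proposal is correct and follows exactly the route the paper takes: the paper derives this corollary by combining the doubling property (Corollary \ref{doubling}) and the Poincar\'e inequality (Corollary \ref{Poincare}) with the Moser-iteration results of \cite{Mo0, CoHoSa}, which is precisely your argument. Your additional remarks on matching the Newtonian/Sobolev framework and on the scale-invariance of the constants are sensible elaborations of the same citation-based proof.
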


\begin{cor}(Liouville theorem)
Assume that the conditions in Theorem \ref{main} hold. Then any non-negative solution to the $p$-Laplace equation is a constant.
\end{cor}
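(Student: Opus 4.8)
The plan is to deduce the Liouville property directly from the scale-invariant Harnack inequality of the previous corollary, via the classical Moser-type argument transplanted to this metric measure setting. The first observation I would make is that the $p$-Laplace operator $\di_\vol(|\nabla f|^{p-2}\nabla f)$ is unchanged when a constant is added to $f$, since the gradient is unaffected. Hence, writing $m=\inf_M u$ for a non-negative $p$-harmonic $u$ (so $m\geq 0$ is finite because $u\geq 0$), the shifted function $u_\e:=u-m+\e$ is, for every $\e>0$, a strictly positive $p$-harmonic function on $M$ with $\inf_M u_\e=\e$.

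Next I would fix a base point $x_0$ and apply the Harnack inequality on the ball $B_{x_0}(R)$ to the positive solution $u_\e$, obtaining
\[
\sup_{B_{x_0}(R/2)}u_\e\leq C\inf_{B_{x_0}(R/2)}u_\e,
\]
where the crucial point is that $C$ depends only on $p$ and not on $R$, $\e$ or $x_0$. This scale-invariance is exactly what the Harnack corollary provides, and it is itself a consequence of the uniform doubling and Poincar\'e constants furnished by Corollaries \ref{doubling} and \ref{Poincare}. Since $M$ is complete, the balls $B_{x_0}(R/2)$ exhaust $M$ as $R\to\infty$; the left side is monotone increasing to $\sup_M u_\e$ and the right side monotone decreasing to $C\inf_M u_\e=C\e$, so passing to the limit yields $\sup_M u_\e\leq C\e$. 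In particular $u$ is bounded.

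Finally, since $\sup_M u_\e=\sup_M u-m+\e$, the last bound reads $\sup_M u-m\leq (C-1)\e$; letting $\e\to 0$ gives $\sup_M u\leq m=\inf_M u$, so $u$ is constant.

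The argument is routine once the ingredients are in place, so the only genuine points requiring care — and the place where the hypotheses of Theorem \ref{main} really enter — are, first, the scale-invariance of the Harnack constant, which must be tracked through the doubling and Poincar\'e estimates so that $C$ does not degenerate as $R\to\infty$, and second, the use of geodesic completeness of $M$ to guarantee that the balls centered at $x_0$ actually exhaust the whole manifold. I would also note in passing that the continuity (indeed local H\"older regularity) of $p$-harmonic functions, needed to make $\sup$ and $\inf$ over balls behave well, follows from the same Moser-type regularity theory underlying the Harnack inequality.
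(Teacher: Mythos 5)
Your proof is correct and is exactly the standard Moser-type deduction of the Liouville property from the scale-invariant Harnack inequality, which is all the paper itself does (it simply cites \cite{Mo0, CoHoSa} rather than writing the argument out). You correctly isolate the one point that matters, namely that the Harnack constant furnished by the preceding corollary is uniform in $R$ and in the center, which holds here because the doubling and Poincar\'e constants coming from $MCP(0,2n+3)$ are global.
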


The following parabolic Harnack inequality also holds (see \cite{Mo1,Mo2,Gr,Sa1}). 

\begin{cor}(Parabolic Harnack inequality)
Assume that the conditions in Theorem \ref{main} hold.  Then, for each $R>0$, there is a constant $C>0$ such that any positive solution to the heat equation $\dot f=\Delta f$ on $(s-r^2,s)\times B_x(R)$ with $0<r<R$ satisfies
\[
\sup_{(s-\frac{3}{4}r^2,s-\frac{1}{2}r^2)\times B_{x}(R/2)}f\leq C\inf_{(s-\frac{1}{4}r^2,s)\times B_{x}(R/2)}f.
\]
for all points $x$ in $M$.
\end{cor}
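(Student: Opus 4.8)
The plan is to deduce the parabolic Harnack inequality from the two structural properties already established, namely the volume doubling property of Corollary~\ref{doubling} and the Poincar\'e inequality of Corollary~\ref{Poincare} (specialized to $p=2$). The key external input is the characterization of the parabolic Harnack inequality in terms of doubling together with a Poincar\'e inequality, due to Grigor'yan and Saloff-Coste (the references \cite{Gr,Sa1}), which itself rests on Moser's iteration technique \cite{Mo1,Mo2}. The proof is therefore reductive: once doubling and the $L^2$-Poincar\'e inequality hold with constants uniform in $x$ and in the radius $R$, the scale-invariant parabolic Harnack inequality follows automatically.

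First I would fix the analytic framework. The heat equation $\dot f=\Delta f$ on the Riemannian manifold $(M,\left<\cdot,\cdot\right>)$ is governed by the Laplace-Beltrami operator, the generator of the strongly local, regular Dirichlet form $\mathcal E(f,g)=\int_M\left<\nabla f,\nabla g\right>d\vol$ on $L^2(M,\vol)$. In this setting the heat kernel exists and solutions of the parabolic equation are the natural objects to which Moser's scheme applies. I would also record that the intrinsic distance of this Dirichlet form coincides with the Riemannian distance $d$, so that the balls $B_x(R)$ appearing in Corollaries~\ref{doubling} and~\ref{Poincare} are indeed the correct geometric objects.

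Next I would invoke the Moser iteration. Volume doubling together with the $L^2$-Poincar\'e inequality yields, by a standard argument, a family of Sobolev--Poincar\'e inequalities on balls; feeding these into the parabolic Moser iteration produces an $L^\infty$ mean-value estimate for non-negative subsolutions and a matching lower bound for supersolutions, and these combine through a Bombieri-Giusti type abstract lemma to give the two-cylinder comparison asserted in the statement. All constants remain uniform because the doubling and Poincar\'e constants are uniform, which is precisely the scale-invariant form of the inequality with the prescribed configuration of the sub-cylinders $(s-\tfrac34 r^2,s-\tfrac12 r^2)\times B_x(R/2)$ and $(s-\tfrac14 r^2,s)\times B_x(R/2)$.

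The main obstacle I anticipate lies not in the iteration, which is by now classical, but in verifying that the abstract equivalence theorem of \cite{Gr,Sa1} genuinely applies in this geometry. Two points require care. First, one needs the weakly Sasakian manifold to be complete, or else to work with local versions of the inequalities valid on the relevant balls, so that the cut-off functions and chaining arguments are available. Second, one must confirm that the Poincar\'e inequality of Corollary~\ref{Poincare}, stated for general $p>1$, does furnish the $p=2$ case with a constant uniform in $R$, as demanded by the $L^2$ theory underlying Moser iteration. Once these hypotheses are checked, the corollary follows directly from the cited characterization.
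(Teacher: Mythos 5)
Your proposal is correct and follows essentially the same route as the paper, which simply cites \cite{Mo1,Mo2,Gr,Sa1} to deduce the parabolic Harnack inequality from the doubling property (Corollary \ref{doubling}) and the $p=2$ Poincar\'e inequality (Corollary \ref{Poincare}). The additional points you flag (completeness, uniformity of constants, identification of the intrinsic distance) are sensible checks but do not constitute a departure from the paper's argument.
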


For a converse result of the above corollary, see also \cite{KuSt,Gr,Sa1}. The above parabolic Harnack inequality is also equivalent to a two sided Gaussian bound for the heat kernel (see \cite{FoSt}). 

\begin{cor}(Two-sided Gaussian bound)
Assume that the conditions in Theorem \ref{main} hold.  Then there are positive constants\\ $C_1, C_2, C_3, C_4$ such that the heat kernel $h$ satisfies
\[
\frac{C_1}{\vol(B_{\sqrt t}(x))}e^{-\frac{C_2d(x,y)^2}{t}}\leq h_t(x,y)\leq \frac{C_3}{\vol(B_{\sqrt t}(x))}e^{-\frac{C_4d(x,y)^2}{t}}
\]
for all points $x$ and $y$ in $M$.
\end{cor}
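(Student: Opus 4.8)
The plan is to obtain the two-sided Gaussian estimate as the final link in a well-known chain of equivalences in analysis on metric measure spaces, taking the parabolic Harnack inequality recorded in the preceding corollary as the input rather than proving anything new about geodesics. Concretely, I would view the heat kernel $h_t(x,y)$ as the integral kernel of the semigroup $e^{t\Delta}$ generated by the self-adjoint operator associated with the Dirichlet form $\mathcal{E}(f,g)=\int_M\left<\nabla f,\nabla g\right>\,d\vol$ on $L^2(M,\vol)$. Since $(M,\left<\cdot,\cdot\right>)$ is a smooth Riemannian manifold, $\mathcal{E}$ is a strongly local, regular Dirichlet form whose intrinsic distance is exactly the Riemannian distance $d$; this is precisely the structural setting in which the Grigor'yan--Saloff-Coste theory applies.

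First I would record that Corollary \ref{doubling} and Corollary \ref{Poincare} (taking $p=2$) supply the two geometric ingredients the abstract theory needs, namely the volume doubling property and the scale-invariant $L^2$-Poincar\'e inequality. By the Grigor'yan--Saloff-Coste characterization (see \cite{Gr,Sa1}), on a complete length space carrying a strongly local regular Dirichlet form with intrinsic distance $d$, the conjunction of volume doubling and the scale-invariant $L^2$-Poincar\'e inequality is equivalent to the parabolic Harnack inequality; this equivalence is the content already stated in the preceding corollary, so the Harnack inequality is available.

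Next I would invoke the equivalence between the parabolic Harnack inequality and the two-sided Gaussian bound, which is the implication flagged in the text immediately before the statement. By \cite{FoSt}, within the present framework the parabolic Harnack inequality holds if and only if the heat kernel satisfies
\[
\frac{C_1}{\vol(B_{\sqrt t}(x))}e^{-\frac{C_2d(x,y)^2}{t}}\leq h_t(x,y)\leq \frac{C_3}{\vol(B_{\sqrt t}(x))}e^{-\frac{C_4d(x,y)^2}{t}}.
\]
Chaining this with the previous step yields the asserted estimate, with the constants $C_1,\dots,C_4$ depending only on the doubling and Poincar\'e constants furnished by Corollaries \ref{doubling} and \ref{Poincare}.

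The only substantive work is to confirm that the abstract hypotheses of these theorems are met, and I expect this verification, rather than any new analytic estimate, to be the main point requiring care. The items to check are: regularity and strong locality of $\mathcal{E}$, which are immediate for the Dirichlet energy of a smooth Riemannian metric; the identification of the intrinsic metric of $\mathcal{E}$ with $d$, which is immediate since $d$ is by construction the Riemannian distance of $\left<\cdot,\cdot\right>$; and the completeness of $(M,d)$ together with the conservativeness needed for $h_t$ to be an honest heat kernel, the latter being a standard consequence of volume doubling with the relevant volume growth. Given these structural facts, the scale-invariant Poincar\'e inequality of Corollary \ref{Poincare} feeds directly into \cite{Gr,Sa1,FoSt}, and the Gaussian bounds follow.
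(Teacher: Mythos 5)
Your proposal follows exactly the route the paper intends: the paper derives this corollary by combining the doubling property and Poincar\'e inequality (Corollaries \ref{doubling} and \ref{Poincare}) to get the parabolic Harnack inequality via \cite{Gr,Sa1}, and then invokes the equivalence of the parabolic Harnack inequality with two-sided Gaussian heat kernel bounds, citing \cite{FoSt}, just as you do. Your verification of the structural hypotheses (strong locality, intrinsic metric, completeness) is a careful elaboration of the same standard chain rather than a different argument.
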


Finally, we remark that there are also consequences following from Corollary \ref{doubling} and \ref{Poincare} about quasi-regular mappings. For this, see \cite{CoHoSa} and references therein.

The paper is organized as follows. In Section \ref{wkSa}, we introduce the weakly Sasakian manifolds and summarize some facts that are needed for this paper. In Section \ref{conjMC}, we begin the proof of Theorem \ref{main} by introducing one of the key ingredients of the proof, a moving frame adapted to the given geometry defined along a geodesic. We also rewrite the measure contraction property as estimates on solutions of a matrix Riccati equation using this moving frame. This approach was also used by the author in various other situations (see \cite{Le1,Le2}). In Section \ref{Hei}, the case of the Heisenberg group is discussed. The proof of Theorem \ref{main} is summarized in Section \ref{proofmain}. Finally, the proofs of the results mentioned in Section \ref{wkSa} are discussed in the appendix.

\smallskip

\section*{Acknowledgements}

The whole work started from a discussion with Professor Ludovic Rifford on the case of the Heisenberg group in \cite{Ri}. The author would like to thank him for suggesting the problem and his encouragement.

This material is partly based upon work supported by the National Science Foundation under Grant No. 0932078 000, while the author was in residence at
the Mathematical Science Research Institute in Berkeley, California, during Fall 2013. The author was also supported by the Research Grant Council of Hong Kong (RGC Ref. No. CUHK404512).

\smallskip

\section{Weakly Sasakian manifolds}\label{wkSa}

In this section, we introduce what we call weakly Sasakian manifolds and discuss some of the properties that are needed in this paper.

Let $\eta$ be a contact form on a manifold $M$ of dimension $2n+1$. This means that the restriction of the two-form $d\eta$ to the distribution $\ker\eta$ is symplectic. Let $V$ be the Reeb field defined by $\eta(V)=1$ and $d\eta(V,\cdot)=0$. Let $J$ be a $(1,1)$-tensor satisfying $\cJ V=0$ and $\cJ^2X=-X$ for all vector field $X$ contained in the distribution $\ker\eta$. Let $\left<\cdot,\cdot\right>$ be a Riemannian metric such that
\begin{equation}\label{conmet}
d\eta(X_1,X_2)=\left<X_1,\cJ X_2\right>.
\end{equation}
Note that this implies, in particular, that the Reeb field $V$ is orthogonal to the distribution $\ker\eta$. We call the structure $(\cJ,V,\eta,\left<\cdot,\cdot\right>)$ weakly contact metric structure. We also say the structure $(\cJ,V,\eta,\left<\cdot,\cdot\right>)$ is weakly Sasakian if the following holds for all vector fields $Y_1$ and $Y_2$:
\[
\begin{split}
&[Y_1,Y_2]+\cJ[\cJ Y_1,Y_2]+\cJ[Y_1,JY_2]-[\cJ Y_1,\cJ Y_2]\\
&=(Y_1\cdot\eta(Y_2)-Y_2\cdot\eta(Y_1))V.
\end{split}
\]

In other words,
\begin{equation}\label{Nij}
\begin{split}
&d\eta(Y_1,Y_2)V\\
&=-\cJ^2[Y_1,Y_2]+\cJ[\cJ Y_1,Y_2]+\cJ[Y_1,\cJ Y_2]-[\cJ Y_1,\cJ Y_2].
\end{split}
\end{equation}

Note that the structure $(\cJ,V,\eta,\left<\cdot,\cdot\right>)$ is Sasakian if $|V|=1$. In this paper, we consider weakly Sasakian manifolds such that the length $|V|$ of the Reeb field $V$ is constant. The proof of the following result is contained in the appendix.

\begin{prop}\label{ind}
Assume that the structure $(\cJ,V,\eta,\left<\cdot,\cdot\right>)$ is weakly Sasakian and $|V|=\e$. Then
\begin{enumerate}
\item $\eta(Y)=\frac{1}{\epsilon^2}\left<V,Y\right>$
\item $\LD_V\cJ=0$,
\item $\LD_Vg=0$,
\item $\nabla_YV=-\frac{\epsilon^2}{2}\cJ Y$,
\item $\nabla_{X_1}\cJ(X_2)=\frac{\left<X_1,X_2\right>}{2}V$,
\item $\nabla_X\cJ(V)=-\frac{\epsilon^2}{2}X$,
\item $\nabla_V\cJ=0$,
\item $(\nabla_{X_1}X_2)_\hor$ is independent of $\e$,
\item $\nabla_{X_1}X_2=(\nabla_{X_1}X_2)_\hor+\frac{1}{2}\left<\cJ X_1,X_2\right>V$,
\item $\nabla_YV=-\frac{\epsilon^2}{2}\cJ Y$,
\item $\nabla_V X_1=([V,X_1])_\hor-\frac{\e^2}{2}\cJ X_1$,
\end{enumerate}
for all vector fields $X_1$, $X_2$, and $Y$ such that $X_1$ and $X_2$ are contained in the distribution $\ker\eta$. Here $Y_\hor$ denotes the orthogonal projection of the vector field $Y$ onto the distribution $\ker\eta$.
\end{prop}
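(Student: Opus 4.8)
The plan is to derive the entire list from three inputs only: the defining compatibility \eqref{conmet}, the weakly Sasakian identity \eqref{Nij}, and the constancy of $|V|=\e$. I would prove them in the logical order (1), then (2)--(3), then (4)$=$(10), then (6), (7), (8), (9), (11) as algebraic consequences, and finally (5), which is the only genuinely nontrivial point. First, (1) is immediate: taking $X_1=V$ in \eqref{conmet} gives $d\eta(V,\cJ X_2)$... more directly $0=d\eta(V,X_2)=\left<V,\cJ X_2\right>$ for $X_2\in\ker\eta$, and since $\cJ$ is onto $\ker\eta$ this forces $V\perp\ker\eta$; writing $Y=Y_\hor+\eta(Y)V$ and pairing with $V$ yields $\left<V,Y\right>=\e^2\eta(Y)$. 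Next, Cartan's formula gives $\LD_V\eta=\iota_Vd\eta+d(\eta(V))=0$ and hence $\LD_Vd\eta=0$; in particular $\ker\eta$ is $\LD_V$-invariant.

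For (2) I would substitute $Y_1=V$ into \eqref{Nij}: since $\cJ V=0$ the two brackets containing $\cJ V$ drop out and the right-hand side collapses to $\cJ\big([V,\cJ Y_2]-\cJ[V,Y_2]\big)=\cJ(\LD_V\cJ)(Y_2)$, while the left-hand side vanishes because $d\eta(V,\cdot)=0$; thus $(\LD_V\cJ)Y_2\in\ker\cJ=\spn\{V\}$. On the other hand $\eta$ annihilates both $[V,\cJ Y_2]$ (by $\LD_V\eta=0$) and $\cJ[V,Y_2]$, so $(\LD_V\cJ)Y_2\in\ker\eta$; the two facts force $(\LD_V\cJ)Y_2=0$, which is (2). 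For (3) I would use that the metric is reconstructed from the invariant data: on $\ker\eta$ one has $g(X_1,X_2)=-d\eta(X_1,\cJ X_2)$, while $g(V,V)=\e^2$ and $g(V,\ker\eta)=0$. Since $\LD_V$ kills $d\eta$, $\cJ$ and $\eta$, and $\e$ is constant, a Leibniz computation gives $\LD_Vg=0$, i.e.\ $V$ is Killing.

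With $V$ Killing, (4)$=$(10) follows from the Killing/curl dictionary. The dual one-form is $V^\flat=\e^2\eta$ by (1), so $dV^\flat=\e^2d\eta$; since $V$ is Killing, $dV^\flat(A,B)=2g(\nabla_AV,B)$, whence $g(\nabla_AV,B)=\tfrac{\e^2}{2}d\eta(A,B)=\tfrac{\e^2}{2}g(A,\cJ B)=-\tfrac{\e^2}{2}g(\cJ A,B)$ for all $B$, giving $\nabla_AV=-\tfrac{\e^2}{2}\cJ A$. Now (6), (7), (9), (11) are purely algebraic consequences of (4) and (2): (6) is $(\nabla_X\cJ)V=-\cJ\nabla_XV=\tfrac{\e^2}{2}\cJ^2X=-\tfrac{\e^2}{2}X$; (11) is the torsion identity $\nabla_VX_1=\nabla_{X_1}V+[V,X_1]$ together with $[V,X_1]\in\ker\eta$; for (9) the $V$-component of $\nabla_{X_1}X_2$ is $\e^{-2}g(\nabla_{X_1}X_2,V)V=\e^{-2}\big(-g(X_2,\nabla_{X_1}V)\big)V=\tfrac12\left<\cJ X_1,X_2\right>V$; and (7) follows by writing $(\nabla_V\cJ)Y=(\LD_V\cJ)Y+\nabla_{\cJ Y}V-\cJ\nabla_YV$ and substituting (2) and (4). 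For (8) I would note that $\ker\eta$, $V$ and $g|_{\ker\eta}=-d\eta(\cdot,\cJ\cdot)$ are all independent of $\e$; applying the Koszul formula to $g(\nabla_{X_1}X_2,Z)$ with horizontal $Z$, every term pairs a (possibly $\e$-dependent) vector with a horizontal one, so only $g|_{\ker\eta}$ enters, and the horizontal part of $\nabla_{X_1}X_2$ is therefore $\e$-independent.

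The remaining and hardest point is (5), equivalently that the purely horizontal part of $(\nabla_{X_1}\cJ)X_2$ vanishes; the $V$-component was already computed in (9) to be $\tfrac12\left<X_1,X_2\right>V$, matching the claim. Here I would imitate the classical almost-Hermitian argument that a symplectic, integrable, compatible structure is K\"ahler. Writing $\Phi(\cdot,\cdot)=g(\cdot,\cJ\cdot)=d\eta$ for the fundamental two-form one has $d\Phi=d(d\eta)=0$, and the weakly Sasakian identity is exactly the normality statement $N_\cJ(Y_1,Y_2)=-d\eta(Y_1,Y_2)V$, where $N_\cJ(A,B)=[\cJ A,\cJ B]-\cJ[\cJ A,B]-\cJ[A,\cJ B]+\cJ^2[A,B]$. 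Using that $\cJ$ is skew-adjoint and $\nabla g=0$ (so $\nabla\cJ$ is skew-adjoint) I would derive, via the Koszul formula, the pointwise identity
\[
2g((\nabla_X\cJ)Y,Z)=d\Phi(X,Y,Z)-d\Phi(X,\cJ Y,\cJ Z)+g(N_\cJ(Y,Z),\cJ X)
\]
for horizontal $X,Y,Z$. The first two terms vanish because $\Phi$ is closed, and the last vanishes because $N_\cJ(Y,Z)$ is a multiple of $V$ while $\cJ X\perp V$; hence $(\nabla_X\cJ)Y$ has no horizontal component, which is (5). The hard part will be establishing this master identity in the present contact setting: since $\cJ^2=-\mathrm{id}$ only on $\ker\eta$ and $\nabla$ does not preserve $\ker\eta$, the standard almost-Hermitian derivation does not transcribe verbatim and one must track the vertical contributions in the Koszul expansion. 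Equivalently, one can phrase the elimination directly: antisymmetry of $T(X,Y,Z):=g((\nabla_X\cJ)Y,Z)$ in $(Y,Z)$, the cyclic relation coming from $d\Phi=0$, and the relation obtained by projecting $N_\cJ=-d\eta\,V$ onto $\ker\eta$ together form a linear system whose only solution is $T\equiv0$.
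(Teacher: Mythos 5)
Your proposal is correct and follows essentially the same route as the paper: the only substantive point, item (5), is settled in the paper exactly by your fallback elimination --- the horizontal projection of the normality condition (\ref{Nij}) (the paper's (\ref{est1})), the cyclic identity coming from $d(d\eta)=0$ (the paper's (\ref{est2})), and skew-adjointness of $\nabla\cJ$ combine to show $\left<X_1,(\nabla_{X_3}\cJ)X_2\right>$ equals its own negative, so the linear system does close as you assert. Your repackagings of the easier items (the Killing/$dV^\flat$ dictionary for (4), reconstruction of $g$ from $\LD_V$-invariant data for (3), the torsion identity for (11)) are mild variants of the paper's Cartan/Koszul computations and are all sound.
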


Let $\bar\nabla$ be the connection defined by
\[
\bar\nabla_{Y_1}Y_2=\nabla_{Y_1}Y_2+\frac{\left<V,Y_2\right>}{2}\cJ Y_1-\frac{1}{2}\left<\cJ Y_1,Y_2\right>V+\frac{\left<V,Y_1\right>}{2}\cJ Y_2.
\]
Note that $\bar\nabla$ is the Tanaka-Webster connection when $\e=1$.

\begin{prop}
The connection $\bar\nabla$ is independent of $\e$.
\end{prop}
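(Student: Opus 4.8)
The plan is to exploit the fact that, as $\e$ varies, the contact form $\eta$, the Reeb field $V$, the tensor $\cJ$, the splitting $TM=\ker\eta\oplus\Real V$ and the induced metric on $\ker\eta$ (which is fixed by $d\eta$ and $\cJ$) all remain unchanged; only the normalization $|V|=\e$ of the Reeb direction moves. Since the difference $\bar\nabla_{Y_1}Y_2-\nabla_{Y_1}Y_2$ is manifestly $C^\infty$-bilinear in $(Y_1,Y_2)$, the operator $\bar\nabla$ is an affine connection, hence tensorial in its first argument and a derivation in its second. Consequently it suffices to check that $\bar\nabla_{E_a}E_b$ is independent of $\e$ on a frame $\{X_1,\dots,X_{2n},V\}$ consisting of a fixed ($\e$-independent) local frame $X_1,\dots,X_{2n}$ of $\ker\eta$ together with $V$: expanding a general $\bar\nabla_{Y_1}Y_2$ in this frame involves only the (fixed) frame coefficients of $Y_1,Y_2$, their derivatives, and the quantities $\bar\nabla_{E_a}E_b$.

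First I would record two $\e$-independent facts: the orthogonal projection onto $\ker\eta$ is $Y_\hor=Y-\eta(Y)V$ (since $V\perp\ker\eta$ forces $\eta(Y)$ to be the $V$-coefficient), and $\left<V,Y\right>=\e^2\eta(Y)$ by Proposition \ref{ind}(1). With these, the correction term $\frac{\left<V,Y_2\right>}{2}\cJ Y_1-\frac12\left<\cJ Y_1,Y_2\right>V+\frac{\left<V,Y_1\right>}{2}\cJ Y_2$ is evaluated on each pair of frame vectors, and in every case it is designed precisely to cancel the explicit $\tfrac{\e^2}{2}$-terms carried by $\nabla$ in Proposition \ref{ind}.

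Then I would treat the four cases. For $X_i,X_j\in\ker\eta$ the correction reduces to $-\frac12\left<\cJ X_i,X_j\right>V$, which cancels the vertical part in Proposition \ref{ind}(9), leaving $\bar\nabla_{X_i}X_j=(\nabla_{X_i}X_j)_\hor$; this is $\e$-independent by Proposition \ref{ind}(8). For $\bar\nabla_{X_i}V$ the correction is $\tfrac{\e^2}{2}\cJ X_i$, cancelling $\nabla_{X_i}V=-\tfrac{\e^2}{2}\cJ X_i$ from (4), so $\bar\nabla_{X_i}V=0$. For $\bar\nabla_V X_j$ the correction is again $\tfrac{\e^2}{2}\cJ X_j$, cancelling the corresponding term in (11) and leaving $\bar\nabla_V X_j=([V,X_j])_\hor$, which is $\e$-independent because both $[V,X_j]$ and the horizontal projection are. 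Finally $\cJ V=0$ annihilates the entire correction as well as $\nabla_V V=-\tfrac{\e^2}{2}\cJ V=0$ from (4), giving $\bar\nabla_V V=0$.

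I do not expect a genuine obstacle here: the substance is the bookkeeping observation that the $\e$-dependence of the Levi-Civita connection is entirely concentrated in the $\tfrac{\e^2}{2}$-terms of Proposition \ref{ind}, which the definition of $\bar\nabla$ exactly removes. The one point needing care is the reduction to a frame: one must use a frame of $\e$-independent vector fields (legitimate since $\ker\eta$ and $V$ do not move with $\e$) so that the frame coefficients appearing in the Leibniz expansion carry no hidden $\e$-dependence. The deepest input, Proposition \ref{ind}(8), is itself established in the appendix and is taken as given.
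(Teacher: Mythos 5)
Your argument is correct: the four frame computations are right, each case does reduce to the items of Proposition \ref{ind} you cite, and the reduction to an $\e$-independent frame via tensoriality in the first slot and the Leibniz rule in the second is legitimate precisely because $\ker\eta$, $V$, $\eta$ and $\cJ$ do not move with $\e$. The substance is the same as the paper's: the $\e$-dependence of $\nabla$ sits entirely in the $\tfrac{\e^2}{2}$-terms of Proposition \ref{ind}, and the correction defining $\bar\nabla$ cancels them. The only difference is organizational: instead of reducing to a frame and checking four cases, the paper proves a single identity valid for arbitrary vector fields,
\[
\bar\nabla_{Y_1}Y_2=\eta(Y_1)[V,Y_2]+(\nabla_{(Y_1)_\hor}(Y_2)_\hor)_\hor+\LD_{(Y_1)_\hor}(\eta(Y_2))V,
\]
obtained by splitting $Y_1=\eta(Y_1)V+(Y_1)_\hor$ and $Y_2=\eta(Y_2)V+(Y_2)_\hor$ inside $\nabla_{Y_1}Y_2$ and absorbing the three correction terms; each summand on the right is manifestly $\e$-free, so no tensoriality or frame argument is needed. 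Your route costs a little extra scaffolding (the justification that checking on a frame suffices) but buys explicit formulas such as $\bar\nabla_{X_i}V=0$ and $\bar\nabla_VX_j=([V,X_j])_\hor$, which the paper's closed form also encodes but less visibly.
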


\begin{proof}
Note that the following formula holds for all vector fields $Y_1$ and $Y_2$
\[
\begin{split}
&\bar\nabla_{Y_1}Y_2=\nabla_{Y_1}Y_2+\frac{\left<V,Y_2\right>}{2}\cJ Y_1-\frac{1}{2}\left<\cJ Y_1,Y_2\right>V+\frac{\left<V,Y_1\right>}{2}\cJ Y_2\\
&=\eta(Y_1)[V,Y_2]+\nabla_{(Y_1)_\hor}Y_2+\frac{\left<V,Y_2\right>}{2}\cJ Y_1 -\frac{1}{2}\left<\cJ Y_1,Y_2\right>V\\
&=\eta(Y_1)[V,Y_2]+(\nabla_{(Y_1)_\hor}(Y_2)_\hor)_\hor+\LD_{(Y_1)_\hor}(\eta(Y_2))V.
\end{split}
\]
Therefore, the result follows from Proposition \ref{ind}.
\end{proof}

Let $\Rm$ and $\overline\Rm$ be the curvature tensors defined by the connections $\nabla$ and $\bar\nabla$, respectively. The two curvatures are related as follows (see Appendix for the proof).

\begin{prop}\label{Rm}
Assume that the structure $(J,V,\eta,\left<\cdot,\cdot\right>)$ is weakly Sasakian and $|V|=\e$. Then
\begin{enumerate}
\item $\Rm(Y_1,Y_2)V=\frac{\e^2\left<Y_2,V\right>}{4}(Y_1)_\hor-\frac{\e^2\left<Y_1,V\right>}{4}(Y_2)_\hor$,
\item $\overline{\Rm}(X_2,X_3)X_1=\Rm(X_2,X_3)X_1+\frac{\e^2\left<\cJ X_3,X_1\right>}{4}\cJ X_2\\ -\frac{\e^2\left<\cJ X_2,X_1\right>}{4}\cJ X_3-\frac{\e^2\left<\cJ X_2,X_3\right>}{2}\cJ X_1$,
\item $\overline\Rm(Y_1,Y_2)V=0$,
\item $\overline\Rm(X_1,V)X_2=\Rm(X_1,V)X_2+\frac{\e^2}{4}\left<X_1,X_2\right>V$,
\item $(\overline\Rm(X_1,V)X_2)_\hor=0$,
\end{enumerate}
for all vector fields $X_1$, $X_2$, $Y_1$, and $Y_2$ such that $X_1$ and $X_2$ are contained in the distribution $\ker\eta$.
\end{prop}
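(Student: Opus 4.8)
The two connections differ by the $(1,2)$-tensor
\[
A(Y_1,Y_2)=\frac{\left<V,Y_2\right>}{2}\cJ Y_1-\frac12\left<\cJ Y_1,Y_2\right>V+\frac{\left<V,Y_1\right>}{2}\cJ Y_2,
\]
so that $\bar\nabla_{Y_1}Y_2=\nabla_{Y_1}Y_2+A(Y_1,Y_2)$. The plan is to first extract from the definition of $A$ and Proposition \ref{ind} three structural facts that make every curvature computation short. Since $\cJ Y$ always lies in $\ker\eta$ and $V\perp\ker\eta$, one has $\left<\cJ Y,V\right>=0$, hence $A(Y,V)=\frac{\e^2}{2}\cJ Y$ and, by Proposition \ref{ind}(4), $\bar\nabla_Y V=\nabla_Y V+\frac{\e^2}{2}\cJ Y=0$. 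Thus $V$ is $\bar\nabla$-parallel, which is exactly statement (3). For horizontal $X_i,X_j$ the first and third terms of $A$ vanish, so $A(X_i,X_j)=-\frac12\left<\cJ X_i,X_j\right>V$; combined with Proposition \ref{ind}(9) this gives the clean identity $\bar\nabla_{X_i}X_j=(\nabla_{X_i}X_j)_\hor$. Likewise $A(V,X)=\frac{\e^2}{2}\cJ X$, so Proposition \ref{ind}(11) yields $\bar\nabla_V X=([V,X])_\hor$.

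Next I would prove (1) by a direct computation. Using $\nabla_Y V=-\frac{\e^2}{2}\cJ Y$ and expanding $\Rm(Y_1,Y_2)V=\nabla_{Y_1}\nabla_{Y_2}V-\nabla_{Y_2}\nabla_{Y_1}V-\nabla_{[Y_1,Y_2]}V$, the terms in which $\cJ$ hits the torsion $\nabla_{Y_1}Y_2-\nabla_{Y_2}Y_1-[Y_1,Y_2]$ cancel because $\nabla$ is torsion-free, leaving
\[
\Rm(Y_1,Y_2)V=-\frac{\e^2}{2}\left((\nabla_{Y_1}\cJ)Y_2-(\nabla_{Y_2}\cJ)Y_1\right).
\]
Decomposing $Y_i$ into horizontal and $V$-parts and feeding in Proposition \ref{ind}(1) and (5)--(7) evaluates $(\nabla_Y\cJ)Z=\frac{\left<Y_\hor,Z_\hor\right>}{2}V-\frac{\e^2}{2}\eta(Z)Y_\hor$; the symmetric $V$-terms cancel under antisymmetrization and statement (1) falls out. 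Two consequences of (1) will be used repeatedly: since $\Rm(X_2,X_3)V=0$ for horizontal $X_2,X_3$, the skew-symmetry $\left<\Rm(A,B)C,D\right>=-\left<\Rm(A,B)D,C\right>$ shows $\Rm(X_2,X_3)X_1$ is horizontal, while the pair symmetry $\left<\Rm(A,B)C,D\right>=\left<\Rm(C,D)A,B\right>$ shows $\Rm(X_1,V)X_2$ is vertical.

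For (2) I would expand $\overline\Rm(X_2,X_3)X_1$ directly from the definition and substitute the three structural facts. Writing $\bar\nabla_{X_3}X_1=(\nabla_{X_3}X_1)_\hor=\nabla_{X_3}X_1-\tfrac12\left<\cJ X_3,X_1\right>V$ and differentiating, the term $\nabla_{X_2}V=-\frac{\e^2}{2}\cJ X_2$ produces the correction $\frac{\e^2}{4}\left<\cJ X_3,X_1\right>\cJ X_2$ together with its $2\leftrightarrow3$ partner. The bracket term splits as $\bar\nabla_{[X_2,X_3]}X_1=\bar\nabla_{([X_2,X_3])_\hor}X_1+\eta([X_2,X_3])\bar\nabla_V X_1$, where $\eta([X_2,X_3])=-d\eta(X_2,X_3)=\left<\cJ X_2,X_3\right>$; comparing $\bar\nabla_V X_1=([V,X_1])_\hor$ against $(\nabla_V X_1)_\hor=([V,X_1])_\hor-\frac{\e^2}{2}\cJ X_1$ from Proposition \ref{ind}(11) contributes the remaining term $-\frac{\e^2}{2}\left<\cJ X_2,X_3\right>\cJ X_1$. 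Since $\Rm(X_2,X_3)X_1$ is horizontal by the previous paragraph, the horizontal projections assemble into exactly the claimed formula (2).

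Finally, (4) is most cleanly obtained from the general identity for two connections differing by a tensor,
\[
\overline\Rm(X,Y)Z=\Rm(X,Y)Z+(\nabla_X A)(Y,Z)-(\nabla_Y A)(X,Z)+A(X,A(Y,Z))-A(Y,A(X,Z)),
\]
evaluated at $X=X_1$, $Y=V$, $Z=X_2$: the quadratic terms give $-\frac{\e^2}{4}\left<X_1,X_2\right>V$ and the derivative terms give $+\frac{\e^2}{2}\left<X_1,X_2\right>V$ (with $(\nabla_V A)(X_1,X_2)=0$), summing to the correction $+\frac{\e^2}{4}\left<X_1,X_2\right>V$ of (4); here one repeatedly uses $\cJ^2=-\mathrm{id}$ on $\ker\eta$, $\left<\cJ X_1,\cJ X_2\right>=\left<X_1,X_2\right>$, and Proposition \ref{ind}(5),(7). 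Statement (5) is then immediate, since $(\overline\Rm(X_1,V)X_2)_\hor=(\Rm(X_1,V)X_2)_\hor=0$ by the verticality of $\Rm(X_1,V)X_2$ noted above. The main obstacle is not conceptual but bookkeeping: in (2) and (4) one must carefully track which intermediate vectors are horizontal or vertical and correctly reduce the Lie-bracket and $\nabla_V$ terms using Proposition \ref{ind}. The guiding principle that organizes everything is that $\bar\nabla$ is the horizontal projection of $\nabla$ on $\ker\eta$ and renders $V$ parallel.
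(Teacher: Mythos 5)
Your proposal is correct, and for the two assertions the paper actually proves in detail, (1) and (2), it follows essentially the same route: a direct expansion of $\Rm(Y_1,Y_2)V$ using $\nabla_Y V=-\tfrac{\e^2}{2}\cJ Y$ and the formulas for $\nabla\cJ$, and a term-by-term comparison of $\bar\nabla$ with $\nabla$ on horizontal fields. Where you genuinely diverge is in the parts the paper dismisses as "similar and omitted." Your observation that $A(Y,V)=\tfrac{\e^2}{2}\cJ Y$ makes $V$ a $\bar\nabla$-parallel field gives (3) in one line, rather than by another curvature expansion; your use of the symmetries of the Levi-Civita curvature together with (1) to conclude that $\Rm(X_2,X_3)X_1$ is horizontal and $\Rm(X_1,V)X_2$ is vertical gives (5) for free and also justifies tracking only horizontal components in (2) (it would be worth one explicit sentence that the structural facts show $\bar\nabla$ preserves $\ker\eta$, so $\overline\Rm(X_2,X_3)X_1$ has no vertical part to begin with); and your appeal to the general identity $\overline\Rm=\Rm+\nabla A\wedge+A\wedge A$ for two connections differing by a tensor turns (4) into a short algebraic evaluation — I checked that the quadratic terms give $-\tfrac{\e^2}{4}\left<X_1,X_2\right>V$, that $(\nabla_V A)(X_1,X_2)=0$ via $\nabla_V\cJ=0$, and that $(\nabla_{X_1}A)(V,X_2)=\tfrac{\e^2}{2}\left<X_1,X_2\right>V$, so the correction $+\tfrac{\e^2}{4}\left<X_1,X_2\right>V$ is right. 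The trade-off is that the paper's bare-hands expansion stays entirely inside Proposition \ref{ind}, while your version front-loads a few structural lemmas that then make all five assertions, not just two, genuinely short.
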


Let $v_0,v_1,...,v_{2n}$ be an orthonormal frame such that $v_0=\frac{1}{\e}V$, $v_1=\frac{1}{|Y_\hor|}Y_\hor$, $v_2=\frac{1}{|Y_\hor|}\cJ Y_\hor$, and $Jv_{2k-1}=v_{2k}$ for each $k=1,...,n$. The following is a consequence of Proposition \ref{Rm}.

\begin{prop}\label{conseq}
Assume that the structure $(J,V,\eta,\left<\cdot,\cdot\right>)$ is weakly Sasakian and $|V|=\e$. Then, for each $i, j \neq 0$,
\begin{enumerate}
\item $\left<\Rm(v_i,Y)Y,v_0\right>=-\frac{\e\left<Y,V\right>|Y_\hor|}{4}\delta_{i1}$,
\item $\left<\Rm(v_0,Y)Y,v_0\right>=\frac{\e^2}{4}|Y_\hor|^2$
\item $\left<\Rm(v_i,Y)Y,v_j\right>,\\
=\frac{\left<Y,V\right>^2}{4}\delta_{ij}-\frac{3\e^2\delta_{i2}\delta_{j2}|Y_\hor|^2}{4}+\left<\overline{\Rm}(v_i,Y)Y,v_j\right>$\\
\item $\ric(Y,Y)=\frac{n\left<Y,V\right>^2}{2}-\frac{3\e^2|Y_\hor|^2}{4}+\overline{\ric}(Y,Y)$,
\end{enumerate}
where $\ric(Y,Y)$ and $\overline\ric(Y,Y)$ are the traces of $v\mapsto\left<\Rm(v,Y)Y,v\right>$ and $v\mapsto\left<\overline\Rm(v,Y)Y,v\right>$, respectively.
\end{prop}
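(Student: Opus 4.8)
The plan is to expand everything in the orthonormal frame $v_0,\dots,v_{2n}$ and convert each occurrence of the Levi--Civita curvature $\Rm$ into the Tanaka--Webster curvature $\overline\Rm$ by means of Proposition \ref{Rm}. The one computational device used throughout is the orthogonal decomposition $Y=Y_\hor+\frac{\left<Y,V\right>}{\e^2}V$ (valid since $|V|=\e$), together with the frame relations $\cJ v_1=v_2$ and $\cJ v_2=-v_1$ and the skew-adjointness $\left<\cJ A,B\right>=-\left<A,\cJ B\right>$ of $\cJ$ (which follows from $d\eta(X_1,X_2)=\left<X_1,\cJ X_2\right>$). These give the frame identities $\left<\cJ v_i,Y_\hor\right>=-|Y_\hor|\delta_{i2}$ and $\left<\cJ Y_\hor,v_j\right>=|Y_\hor|\delta_{j2}$, which are what ultimately produce the $\delta_{i2}\delta_{j2}$ terms.

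For (1) and (2) I would argue directly. Writing $v_0=\frac1\e V$ and using antisymmetry of $\Rm$ in its last two slots, $\left<\Rm(v_i,Y)Y,v_0\right>=-\frac1\e\left<\Rm(v_i,Y)V,Y\right>$; Proposition \ref{Rm}(1) with $v_i\in\ker\eta$ gives $\Rm(v_i,Y)V=\frac{\e^2\left<Y,V\right>}{4}v_i$, and pairing with $Y$ extracts $\left<v_i,Y\right>=|Y_\hor|\delta_{i1}$, which is (1). For (2), the pair symmetry of the (torsion-free) Levi--Civita curvature and Proposition \ref{Rm}(1) give $\Rm(Y,V)V=\frac{\e^4}{4}Y_\hor$, whence $\left<\Rm(v_0,Y)Y,v_0\right>=\frac1{\e^2}\left<\Rm(Y,V)V,Y\right>=\frac{\e^2}{4}|Y_\hor|^2$.

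The heart of the proof is (3). Setting $\alpha=\frac{\left<Y,V\right>}{\e^2}$ and expanding $\Rm(v_i,Y)Y$ multilinearly in $Y=Y_\hor+\alpha V$ gives four terms. The pure horizontal term $\left<\Rm(v_i,Y_\hor)Y_\hor,v_j\right>$ is rewritten with Proposition \ref{Rm}(2); since $\left<\cJ Y_\hor,Y_\hor\right>=0$, its three $\cJ$-corrections collapse to $\frac{3\e^2}{4}\left<\cJ v_i,Y_\hor\right>\left<\cJ Y_\hor,v_j\right>=-\frac{3\e^2}{4}|Y_\hor|^2\delta_{i2}\delta_{j2}$, the second term of (3). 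The term $\alpha^2\left<\Rm(v_i,V)V,v_j\right>$ equals $\alpha^2\frac{\e^4}{4}\delta_{ij}=\frac{\left<Y,V\right>^2}{4}\delta_{ij}$ by Proposition \ref{Rm}(1), the first term of (3). The two cross terms vanish: $\Rm(v_i,Y_\hor)V=0$ by Proposition \ref{Rm}(1) (both $v_i,Y_\hor\perp V$), while $\left<\Rm(v_i,V)Y_\hor,v_j\right>=0$ because Proposition \ref{Rm}(4) expresses $\Rm(v_i,V)Y_\hor$ as $\overline\Rm(v_i,V)Y_\hor$ plus a multiple of $V$, and Proposition \ref{Rm}(5) forces the horizontal part of $\overline\Rm(v_i,V)Y_\hor$ to vanish. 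Finally, to match the surviving $\left<\overline\Rm(v_i,Y_\hor)Y_\hor,v_j\right>$ with the claimed $\left<\overline\Rm(v_i,Y)Y,v_j\right>$, I would expand the latter in the same way and discard the extra terms using Proposition \ref{Rm}(3) (so $\overline\Rm(\cdot,\cdot)V=0$) and (5).

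For (4) I would trace over the full orthonormal basis. The $v_0$ slot contributes $\frac{\e^2}{4}|Y_\hor|^2$ by (2), while the diagonal sum of (3) yields $2n\cdot\frac{\left<Y,V\right>^2}{4}=\frac{n\left<Y,V\right>^2}{2}$ from the first term, the single $i=2$ contribution $-\frac{3\e^2}{4}|Y_\hor|^2$ from the second, and $\overline\ric(Y,Y)$ from the third (the $v_0$ term of $\overline\ric$ vanishes by Proposition \ref{Rm}(3) with antisymmetry, so no pair symmetry of the torsioned $\overline\nabla$ is invoked). Assembling the $v_0$ contribution with the horizontal trace produces the Ricci identity (4). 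I expect the main obstacle to be bookkeeping rather than conceptual: keeping the multilinear expansion organized, correctly evaluating the frame products $\left<\cJ v_i,v_j\right>$, and remembering that $\overline\Rm$ enjoys only the antisymmetries of a metric connection and \emph{not} the pair symmetry, so every manipulation touching the $V$-direction must be routed through parts (3)--(5) of Proposition \ref{Rm} instead of through a Bianchi-type identity.
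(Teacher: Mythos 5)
Your derivations of parts (1)--(3) are correct and are exactly the computation the paper intends: the paper offers no proof of Proposition \ref{conseq} beyond the remark that it is a consequence of Proposition \ref{Rm}, and your expansion $Y=Y_\hor+\frac{\left<Y,V\right>}{\e^2}V$ together with the frame identities $\left<\cJ v_i,Y_\hor\right>=-|Y_\hor|\delta_{i2}$ and $\left<\cJ Y_\hor,v_j\right>=|Y_\hor|\delta_{j2}$ reproduces precisely the entries of the matrix $R(t)-\bar R(t)$ recorded at the end of Section \ref{conjMC} (with $b=-\frac{\e|\nabla_H f_0|}{2}$, $c=\frac{\left<\nabla f_0,V\right>}{2}$). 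The handling of the cross terms via parts (1), (4), (5) of Proposition \ref{Rm}, and the observation that $\bar\nabla$ is metric so that $\overline\Rm$ is skew in its last two slots, are all sound.

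There is, however, an arithmetic problem in your part (4) that you assert away rather than resolve. Tracing over the full orthonormal basis as you describe, the $v_0$ slot contributes $+\frac{\e^2}{4}|Y_\hor|^2$ by (2) and the $i=2$ slot contributes $-\frac{3\e^2}{4}|Y_\hor|^2$ by (3); these assemble to $-\frac{\e^2}{2}|Y_\hor|^2$, not the $-\frac{3\e^2}{4}|Y_\hor|^2$ appearing in the statement, so your claim that the assembly ``produces the Ricci identity (4)'' is false as written. A sanity check on the Heisenberg group, where $\overline\Rm\equiv 0$ and the sectional curvatures in an orthonormal frame with $[e_1,e_2]=\e e_3$ are $-\frac{3\e^2}{4},\frac{\e^2}{4},\frac{\e^2}{4}$, gives $\ric(e_1,e_1)=-\frac{\e^2}{2}$, confirming the full-trace value; the same number comes from tracing the explicit $R(t)-\bar R(t)$ matrix of Section \ref{conjMC}, namely $b^2+c^2+(c^2-3b^2)+(2n-2)c^2=2nc^2-2b^2$. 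So either the coefficient $-\frac{3\e^2}{4}$ in the stated part (4) is a typo for $-\frac{\e^2}{2}$ (part (4) is not used elsewhere in the paper, so nothing downstream is affected), or the intended trace excludes the $v_0$ direction; either way you should have flagged the discrepancy instead of declaring the bookkeeping to close.
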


Note that, for each tangent vector $Y$ with $Y_\hor\neq 0$, $\ric(Y,Y)\to -\infty$ as $\e\to\infty$.

\smallskip

\section{On conjugate points and measure contraction}\label{conjMC}

From now on, we assume that the structure $(\cJ,V,\eta,\left<\cdot,\cdot\right>)$ on the manifold $M$ is weakly Sasakian with $|V|=\e$. In this section, we prove some preliminary results on conjugate points and measure contraction properties of these manifolds.

Let $t\mapsto\gamma_\e(t)$ be a family of geodesics parameterized by the variable $\e$ such that $\gamma_\e(0)=x$. It follows that $\frac{D^2}{dt^2}\gamma_\e(t)=0$ and so
\[
\begin{split}
0&=\frac{D}{d\e}\frac{D^2}{dt^2}\gamma_\e(t)\Big|_{\e=0}\\
&=\frac{D}{dt}\frac{D}{dt}\gamma_0'(t)+\Rm(\gamma'_0(t),\dot\gamma_0(t))\dot\gamma_0(t)\\
\end{split}
\]

Let $v_0(t)=\frac{1}{\epsilon}V(\gamma_0(t))$ and let $v_{2i}(t)=\mathcal Jv_{2i-1}(t)$. We also assume that $v_1(t)=\frac{1}{|(\dot\gamma_0(t))_H|}(\dot\gamma_0(t))_H$ and $v_2(t)=\frac{1}{|(\dot\gamma_0(t))_H|}\mathcal J\dot\gamma_0(t)$. It follows that
\[
\begin{split}
\dot v_0(t)&=\frac{1}{\epsilon}\nabla_{\dot\gamma_0(t)}V(\gamma_0(t))=-\frac{\epsilon|(\dot\gamma_0(0))_H|}{2}v_2(t),
\end{split}
\]
\[
\begin{split}
v_1(t)&=\frac{1}{|(\dot\gamma_0(t))_H|}\left(\dot\gamma(t)-\left<\dot\gamma(t),v_0(t)\right>v_0(t)\right),\\
&=\frac{1}{|(\dot\gamma_0(0))_H|}\left(\dot\gamma(t)-\left<\dot\gamma(0),v_0(0)\right>v_0(t)\right)
\end{split}
\]
\[
\begin{split}
\dot v_1(t)&=-\frac{\left<\dot\gamma(0),v_0(0)\right>}{|(\dot\gamma_0(0))_H|}\dot v_0(t)=\frac{\e\left<\dot\gamma(0),v_0(0)\right>}{2}v_2(t),
\end{split}
\]
and
\[
\begin{split}
\dot v_2(t)&=\nabla_{\dot\gamma_t}\mathcal J( v_1(t))+\mathcal J\dot v_1(t)\\
&=\frac{\e|(\dot\gamma(0))_H|}{2}v_0(t)-\frac{\epsilon\left<\dot\gamma(0),v_0(0)\right>}{2}v_1(t).
\end{split}
\]
Finally, we can choose $v_3(t),...,v_{2n}(t)$ such that $\dot v_i(t)$ is contained in the span of $v_0(t), v_1(t), v_2(t)$ for each $i=3,...,2n$. It follows that $\left<\dot v_i(t),v_j(t)\right>=-\left<v_i(t),\dot v_j(t)\right>=0$ for each $j=0,1,2$ and $i=3,...,2n$. Therefore, $\dot v_i(t)=0$.

Let $W(t)$ be the matrix defined by $\dot v(t)=W(t)v(t)$ and let $\nabla_Hf=(\nabla f)_\hor$. Since $|\dot\gamma(t)|$ and $\left<V(\gamma(t)),\dot\gamma(t)\right>$ are independent of $t$,
\[
W=\left(
       \begin{array}{cccc}
         0 & 0 & -\frac{\epsilon|\nabla_Hf_0|}{2} & 0 \\
         0 & 0 & \frac{\left<\nabla f_0, V\right>}{2} & 0 \\
         \frac{\epsilon|\nabla_Hf_0|}{2} & -\frac{\left<\nabla f_0, V\right>}{2} & 0 & 0 \\
         0 & 0 & 0 & O_{2n-2} \\
       \end{array}
     \right).
\]

Let $a(t)$ be the matrix defined by $\gamma'_0(t)=a(t)v(t)$. It follows that
\[
\frac{D}{dt}\gamma'_0(t)=\dot a(t)v(t)+a(t)Wv(t)
\]
and
\[
\begin{split}
-a(t)R(t)v(t)&=-\Rm(\gamma'_0(t),\dot\gamma_0(t))\dot\gamma_0(t)=\frac{D^2}{dt^2}\gamma'_0(t)\\
&=\ddot a(t)v(t)+2\dot a(t)Wv(t)+a(t)W^2v(t),
\end{split}
\]
where $R_{ij}(t)=\left<\Rm(v_i(t),\dot\gamma_0(t))\dot\gamma_0(t),v_j(t)\right>$.

Let $\cA(t)$ be solution of the following equation
\[
\begin{split}
\ddot \cA(t)+2\dot \cA(t)W+\cA(t)W^2+\cA(t)R(t)=0
\end{split}
\]
with initial conditions $\cA(0)=0$ and $\dot \cA(0)=I$.

Let $\cF(t)=\cA(t)^{-1}\dot \cA(t)+W$. Then
\begin{equation}\label{riccaticF}
\begin{split}
\dot \cF(t)&=-\cA(t)^{-1}\dot \cA(t)\cA(t)^{-1}\dot \cA(t)+\cA(t)^{-1}\ddot \cA(t)\\
&=-\cA(t)^{-1}\dot \cA(t)\cA(t)^{-1}\dot \cA(t)+\cA(t)^{-1}\ddot \cA(t)\\
&=-(\cF(t)-W)^2-2(\cF(t)-W)W-W^2-R(t)\\
&=-\cF(t)^2-\cF(t)W-W^T\cF(t)-R(t).
\end{split}
\end{equation}

Since $\gamma_0(0)$ and $\gamma_0(\tau)$ are conjugate along $\gamma$ if and only if $\left<\cF(t)\,v,v\right>\to -\infty$ as $t\to \tau$ for some vector $v$, we have the following

\begin{prop}\label{conjS}
Assume that $\gamma_0$ is a minimizing geodesic between its endpoints $\gamma_0(0)$ and $\gamma_0(1)$. Then $\tr\, \cF(t)$ stays bounded for all $t$ in $(0,1]$.
\end{prop}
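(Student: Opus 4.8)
The plan is to exploit the relationship between conjugate points and the matrix Riccati solution $\cF(t)$ established just before the statement. Recall that $\gamma_0(0)$ and $\gamma_0(\tau)$ are conjugate along $\gamma_0$ precisely when $\left<\cF(t)v,v\right>\to-\infty$ as $t\to\tau$ for some vector $v$. Since $\gamma_0$ is assumed to be \emph{minimizing} on $[0,1]$, there are no conjugate points to $\gamma_0(0)$ in the open interval $(0,1)$, and $\gamma_0(1)$ is at worst the first conjugate point. Consequently $\cA(t)$ is invertible on $(0,1)$, so $\cF(t)=\cA(t)^{-1}\dot\cA(t)+W$ is well-defined and smooth there. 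The goal is therefore to show that $\tr\,\cF(t)$ does not diverge to $-\infty$ as $t\to 1^-$ (and is manifestly finite on any compact subinterval of $(0,1)$).

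The key step is to extract a scalar differential inequality for $\phi(t):=\tr\,\cF(t)$ from the matrix Riccati equation \eqref{riccaticF}. Taking the trace of $\dot\cF=-\cF^2-\cF W-W^T\cF-R$, I would use that $\tr(\cF W)=\tr(W^T\cF)$ and that $\tr(\cF^2)\geq\frac{1}{m}(\tr\,\cF)^2$ for a symmetric part argument, where $m=2n+1$. The first obstacle is that $\cF$ need not be symmetric because of the antisymmetric matrix $W$; I would decompose $\cF=\cF_{\mathrm{sym}}+\cF_{\mathrm{skew}}$ and observe that $\tr\,\cF=\tr\,\cF_{\mathrm{sym}}$, while $\tr(\cF^2)=\tr(\cF_{\mathrm{sym}}^2)-\tr(\cF_{\mathrm{skew}}^2)\geq\tr(\cF_{\mathrm{sym}}^2)\geq\frac{1}{m}(\tr\,\cF)^2$ since $\cF_{\mathrm{skew}}^2$ is negative semidefinite. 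The cross terms $\tr(\cF W)+\tr(W^T\cF)=2\tr(\cF_{\mathrm{sym}}W)$ vanish because $\cF_{\mathrm{sym}}$ is symmetric and $W$ is skew-symmetric. This yields
\[
\dot\phi\leq-\frac{1}{m}\phi^2-\tr\,R(t).
\]

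Next I would control $\tr\,R(t)=\ric(\dot\gamma_0,\dot\gamma_0)$ using Proposition \ref{conseq}(4), which expresses $\ric(Y,Y)=\tfrac{n}{2}\left<Y,V\right>^2-\tfrac{3\e^2}{4}|Y_\hor|^2+\overline\ric(Y,Y)$. Because $|\dot\gamma_0|$ and $\left<V(\gamma_0),\dot\gamma_0\right>$ are constant along the geodesic (as noted in the text), and invoking the curvature hypotheses (1) and (2) of Theorem \ref{main} to bound $\overline\ric$ from below, $\tr\,R(t)$ is a \emph{constant} $c$ along $\gamma_0$. The scalar Riccati inequality $\dot\phi\leq-\frac{1}{m}\phi^2-c$ then precludes blow-up to $-\infty$ in finite time: a standard comparison with the solutions of $\dot\psi=-\frac{1}{m}\psi^2-c$ (whose maximal solutions, whether $c\geq 0$ giving an $\arctan$-type barrier or $c<0$ giving a $\tanh$/$\coth$ barrier, exist as long as the geodesic is minimizing) shows $\phi$ stays bounded below on $(0,1]$. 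The main obstacle is the sign and constancy of $c$: I must verify that the pointwise assumptions on $\overline\Rm$ indeed force $\tr\,R(t)$ to be bounded below by a constant independent of $t$, and that the comparison barrier does not itself escape to $-\infty$ before $t=1$—which is exactly guaranteed by the minimality (no conjugate point before $t=1$) via the correspondence between the comparison ODE's blow-up time and the first conjugate point. Combining the boundedness below (from the Riccati inequality) with the finiteness above on compact subintervals gives that $\tr\,\cF(t)$ stays bounded for all $t\in(0,1]$.
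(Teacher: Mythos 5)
There is a genuine gap, and it is the direction of your differential inequality. From $\dot\phi\leq-\frac{1}{m}\phi^2-c$ one obtains an \emph{upper} barrier for $\phi=\tr\,\cF$, which is exactly the wrong side: the danger for $\tr\,\cF$ is divergence to $-\infty$, and an upper bound cannot preclude that. Indeed $\phi(t)=m/(t-\tau)$ satisfies $\dot\phi=-\frac{1}{m}\phi^2$ and blows up to $-\infty$ as $t\to\tau^-$, so your inequality is perfectly consistent with the behaviour you are trying to rule out. (This is the same trace--Riccati inequality that is used to \emph{force} conjugate points in Bonnet--Myers type arguments; it can never by itself prevent them.) There are also two secondary problems: the algebraic step $\tr(\cF^2)\geq\tr(\cF_{\mathrm{sym}}^2)$ has the wrong sign, since $\tr(\cF^2)=\tr(\cF_{\mathrm{sym}}^2)+\tr(\cF_{\mathrm{skew}}^2)$ and $\tr(\cF_{\mathrm{skew}}^2)\leq 0$, so $\tr(\cF^2)\geq\frac{1}{m}(\tr\,\cF)^2$ can fail for non-symmetric $\cF$; and you invoke the curvature hypotheses of Theorem \ref{main}, which are not part of the standing assumptions of this proposition and are not needed for it.

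The paper's argument is much shorter and does not use the Riccati equation quantitatively at all: by the criterion stated immediately before the proposition, $\gamma_0(0)$ and $\gamma_0(\tau)$ are conjugate if and only if $\left<\cF(t)v,v\right>\to-\infty$ as $t\to\tau$ for some $v$; since a minimizing geodesic has no interior conjugate points, $\cA(t)$ is invertible and no such blow-up occurs on $(0,1]$, so $\tr\,\cF(t)$ stays bounded there. Your opening paragraph already contains this argument in essence --- minimality implies $\cA(t)$ is invertible and $\cF$ is finite --- and if you simply combine that with the blow-up characterization you are done; the entire scalar Riccati machinery that follows is both unnecessary and, as it stands, incorrect for the stated purpose.
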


Next, we consider the contraction of the measure $\vol$ along geodesics ending at the same point $x_0$. Let $d(x,x_0)$ be the Riemannian distance between the points $x$ and $x_0$. It is locally semi-concave and so twice differentiable Lebesgue almost everywhere. Let $\exp$ be the Riemannian exponential map and let $\varphi_t=\exp(t\nabla f_0)$, where $f_0(x)=-\frac{d^2(x,x_0)}{2}$. For the rest of this section, we discuss the volume contraction $\vol(\varphi_t(U))$, where $U$ is a fixed Borel set.

For this, let $x$ be a point where $f_0$ is twice differentiable. Let $v_0(t),...,v_{2n}(t)$ be a orthonormal frame along $\varphi_t(x)$ defined in the same way as the beginning of this section. Let $v(t)=(v_0(t),...,v_{2n}(t))^T$ and let $A(t)$ be the matrix defined by
\[
d\varphi_t(v(0))=A(t) v(t).
\]
It follows that
\[
\begin{split}
\frac{D}{dt}d\varphi_t(v(0))&=\dot A(t) v(t)+A(t) \dot v(t)\\
&=\left(\dot A(t) +A(t)W\right)v(t)\\
\end{split}
\]
and
\[
\begin{split}
&\frac{D^2}{dt^2}d\varphi_t(v(0))
=\left(\ddot A(t) +2\dot A(t) W+A(t)W^2\right)v(t).
\end{split}
\]

Let $f_t(y)=-\frac{d^2(y,x_0)}{2(1-t)}$. Then $f_t$ satisfies
 \[
 \dot f_t+\frac{1}{2}|\nabla f_t|^2=0
 \]
 at $x$ and so $\dot \varphi_t(x)=\nabla f_t(\varphi_t(x))$. Therefore, we also have
\[
\begin{split}
\frac{D}{dt}d\varphi_t(v_i(0))&=\frac{d}{ds}\nabla f_t(\varphi_t(\gamma_i(s)))\Big|_{s=0}\\
&=\nabla^2 f_t(d\varphi_t(v_i(0)))=\sum_{j,k}A_{ij}(t)F_{jk}(t)v_k(t)
\end{split}
\]
and
\[
\begin{split}
&\frac{D^2}{dt^2}d\varphi_t(v_i(0))=\frac{D}{dt}\frac{D}{ds}\nabla f_t(\varphi_t(\gamma_i(s)))\Big|_{s=0}\\
&=\Rm(\nabla f_t(\varphi_t),d\varphi_t(v_i(0)))\nabla f_t(\varphi_t))=-\sum_{j,k}A_{ij}(t)R_{jk}(t)v_k(t),
\end{split}
\]
where  $F_{ij}(t)=\left<\nabla^2f_t(v_i(t)),v_j(t)\right>$ and
\[
R_{ij}(t)=\left<\Rm(v_i(t),\nabla f_t(\varphi_t))\nabla f_t(\varphi_t),v_j(t)\right>.
\]

It also follows that
\[
\begin{split}
&-R(t)=A(t)^{-1}\ddot A(t) +2A(t)^{-1}\dot A(t) W+W^2.
\end{split}
\]
Hence,
\[
F(t)=A(t)^{-1}\dot A(t)+W
\]
and
\begin{equation}\label{riccatiF}
\begin{split}
\dot F(t)&=-R(t)-F(t)^2-F(t)W-W^TF(t).
\end{split}
\end{equation}

It also follows that $\det A(t)=e^{\int_0^t\tr F(s)ds}$. Hence, by applying \cite[Theorem 11.3]{Vi}, we obtain

\begin{prop}\label{measure}
\[
\int_{\varphi_t(U)}d\vol=\int_Ue^{\int_0^t\tr F(s)ds}d\vol.
\]
\end{prop}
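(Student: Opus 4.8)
The plan is to establish the change-of-variables formula $\int_{\varphi_t(U)}d\vol=\int_U e^{\int_0^t \tr F(s)\,ds}\,d\vol$ by showing that the Jacobian of the map $\varphi_t$ at a point $x$, measured with respect to the orthonormal frame $v(t)$, equals $\det A(t)$, and then that $\det A(t)=e^{\int_0^t \tr F(s)\,ds}$. The first step follows essentially by definition: the matrix $A(t)$ is defined by $d\varphi_t(v(0))=A(t)v(t)$, and since both $\{v_i(0)\}$ and $\{v_i(t)\}$ are orthonormal frames, the Jacobian determinant of the linear map $d\varphi_t$ at $x$ with respect to these bases is precisely $|\det A(t)|$. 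Thus the area formula (or the classical Riemannian change-of-variables formula, e.g.\ \cite[Theorem 11.3]{Vi}) gives
\[
\int_{\varphi_t(U)}d\vol=\int_U |\det A(t)|\,d\vol.
\]

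Next I would verify the identity $\det A(t)=e^{\int_0^t \tr F(s)\,ds}$, which is the analytic heart of the proposition. The key is the already-established relation $F(t)=A(t)^{-1}\dot A(t)+W$, equivalently $\dot A(t)=A(t)(F(t)-W)$. Applying Jacobi's formula (Liouville's formula) $\frac{d}{dt}\det A(t)=\det A(t)\cdot\tr\big(A(t)^{-1}\dot A(t)\big)$ yields
\[
\frac{d}{dt}\det A(t)=\det A(t)\cdot\tr\big(F(t)-W\big).
\]
Because $W$ is skew-symmetric, as one reads off directly from its explicit $(2n+1)\times(2n+1)$ matrix form given above (the nonzero entries appear in antisymmetric pairs and the diagonal block $O_{2n-2}$ is zero), we have $\tr W=0$. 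Hence $\frac{d}{dt}\det A(t)=\det A(t)\cdot\tr F(t)$, and integrating this scalar linear ODE with the initial condition coming from $A(0)$ gives $\det A(t)=\det A(0)\cdot e^{\int_0^t \tr F(s)\,ds}$.

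The remaining point is the normalization at $t=0$: one checks that $A(0)=I$, so $\det A(0)=1$ and the sign of $\det A(t)$ stays positive for small $t$ and hence (by continuity, away from conjugate points) on the relevant interval, allowing the absolute value to be dropped. This follows since $\varphi_0$ is the identity, so $d\varphi_0$ is the identity map and $A(0)=I$. Combining the two steps gives exactly the claimed formula.

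The main obstacle I anticipate is not the computation but the measure-theoretic justification of the change of variables. The function $f_0(x)=-d^2(x,x_0)/2$ is only locally semiconcave, hence merely twice differentiable Lebesgue-almost everywhere, so $\varphi_t$ is not smooth and $A(t)$, $F(t)$ are defined only at points of twice-differentiability of $f_0$. The delicate part is therefore to ensure that the pointwise Jacobian identity integrates correctly over $U$ despite this limited regularity: one must argue that the set of bad points has measure zero, that $\varphi_t$ restricted to the good set is injective (no geodesics crossing before the cut locus), and that the general change-of-variables theorem for Lipschitz maps applies. This is precisely where invoking \cite[Theorem 11.3]{Vi} does the heavy lifting, as it is tailored to exactly this optimal-transport setting with semiconcave potentials.
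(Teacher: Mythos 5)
Your proposal matches the paper's argument: the paper likewise derives $\det A(t)=e^{\int_0^t\tr F(s)\,ds}$ from the relation $F(t)=A(t)^{-1}\dot A(t)+W$ (with $\tr W=0$ since $W$ is skew-symmetric) and then invokes \cite[Theorem 11.3]{Vi} for the change of variables. You have simply spelled out the Liouville-formula step, the normalization $A(0)=I$, and the measure-theoretic caveats that the paper leaves implicit.
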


Finally, we record the following formula.
\begin{prop}
Let $\bar R(t)$ be the matrix defined by
\[
\bar R_{ij}(t)=\left<\overline\Rm(v_i(t),\nabla f_t(\varphi_t))\nabla f_t(\varphi_t),v_j(t)\right>.
\]
Then $R(t)$ satisfies
\[
R(t)=\bar R(t)+\left(
       \begin{array}{cccc}
         b^2 & bc & 0 & 0 \\
         bc & c^2 & 0 & 0 \\
         0 & 0 & c^2-3b^2 & 0 \\
         0 & 0 & 0 & c^2I \\
       \end{array}
     \right)
\]
where $b=-\frac{\e|\nabla_H f_0|}{2}$ and $c=\frac{\left<\nabla f_0,V\right>}{2}$.
\end{prop}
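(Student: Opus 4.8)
The plan is to establish the matrix identity entrywise, feeding the geodesic velocity $Y:=\nabla f_t(\varphi_t)=\dot\varphi_t$ into the curvature computations already recorded, and observing that the frame $\{v_0(t),\dots,v_{2n}(t)\}$ along $\varphi_t$ is precisely of the type to which Proposition \ref{conseq} applies (with this choice of $Y$).

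First I would verify that the scalars $b$ and $c$, though written using $f_0$, record quantities conserved along $t\mapsto\varphi_t(x)$. Since $\varphi_t$ is a geodesic, $|\dot\varphi_t|=|\nabla f_t(\varphi_t)|$ is constant; and from $\nabla_{\dot\varphi_t}V=-\frac{\e^2}{2}\cJ\dot\varphi_t$ (Proposition \ref{ind}) together with the skew-symmetry of $\cJ$ implied by \eqref{conmet}, the quantity $\left<V,\dot\varphi_t\right>$ is constant as well. Hence $|(\nabla f_t)_\hor|=|\nabla_H f_0|$ and $\left<\nabla f_t,V\right>=\left<\nabla f_0,V\right>$ along the geodesic, so with $Y=\nabla f_t(\varphi_t)$ one has $b=-\frac{\e|Y_\hor|}{2}$ and $c=\frac{\left<Y,V\right>}{2}$, whence $b^2=\frac{\e^2}{4}|Y_\hor|^2$, $c^2=\frac{\left<Y,V\right>^2}{4}$ and $bc=-\frac{\e|Y_\hor|\left<Y,V\right>}{4}$.

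Next I would apply Proposition \ref{conseq} with this $Y$. Its item (2) gives $R_{00}=\frac{\e^2}{4}|Y_\hor|^2=b^2$; item (1) gives $R_{i0}=-\frac{\e\left<Y,V\right>|Y_\hor|}{4}\delta_{i1}=bc\,\delta_{i1}$ for $i\neq 0$, and since the directional operator $v\mapsto\Rm(v,Y)Y$ is self-adjoint for the Levi-Civita curvature, $R_{0j}=bc\,\delta_{j1}$; finally item (3) gives, for $i,j\neq 0$,
\[
R_{ij}=c^2\delta_{ij}-3b^2\delta_{i2}\delta_{j2}+\bar R_{ij}.
\]
These already reproduce the lower-right block and all the explicit scalars $b^2,bc,c^2,c^2-3b^2,c^2I$ of the claimed matrix, provided that the $0$-th row and $0$-th column of $\bar R$ vanish.

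The remaining, and principal, point is therefore to show $\bar R_{i0}=\bar R_{0j}=0$ for all $i,j$. Here I would use that $\bar\nabla$ is metric (a direct consequence of \eqref{conmet} and Proposition \ref{ind}), so that $\overline\Rm$ is antisymmetric in its last two arguments, together with Proposition \ref{Rm}(3),(5). For the $0$-th column, $\bar R_{i0}=\frac1\e\left<\overline\Rm(v_i,Y)Y,V\right>=-\frac1\e\left<\overline\Rm(v_i,Y)V,Y\right>=0$ by Proposition \ref{Rm}(3) (taking $i=0$ also yields $\bar R_{00}=0$). For the $0$-th row, writing $Y=Y_\hor+\frac{\left<Y,V\right>}{\e^2}V$ and using $\overline\Rm(\cdot,\cdot)V=0$ reduces $\overline\Rm(Y,V)Y$ to $\overline\Rm(Y_\hor,V)Y_\hor$, which is vertical by Proposition \ref{Rm}(5); hence $\bar R_{0j}=-\frac1\e\left<\overline\Rm(Y,V)Y,v_j\right>=0$ for $j\neq 0$. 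Combining these vanishing statements with the three families of identities above gives $R=\bar R+M$ entrywise. The delicate step is exactly this last one: because the Tanaka-Webster connection carries torsion, $\overline\Rm$ need not enjoy the pair symmetry of the Riemannian curvature, so the row and column must be handled separately rather than one deduced from the other, and one must rely on the structural identities $\overline\Rm(\cdot,\cdot)V=0$ and $(\overline\Rm(X,V)X')_\hor=0$ rather than on generic curvature symmetries.
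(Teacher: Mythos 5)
Your proof is correct and is precisely the derivation the paper intends: the paper merely records this formula without proof, and it amounts to specializing Proposition \ref{conseq} to $Y=\nabla f_t(\varphi_t)$ (legitimate because $|Y_\hor|$ and $\left<Y,V\right>$ are conserved along the geodesic, as the paper itself notes when computing $W$), combined with the vanishing of the $0$th row and column of $\bar R(t)$. You correctly isolate that vanishing as the only nontrivial point and establish it properly, treating row and column separately via the metricity of $\bar\nabla$ together with Proposition \ref{Rm}(3) and (5), since $\overline\Rm$ need not enjoy the pair symmetry of the Riemannian curvature.
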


\smallskip

\section{The Heisenberg group}\label{Hei}

In this section, we discuss the Heisenberg group which is the model case of our results.

First, recall that the underlying manifold of the Heisenberg group is $M=\Real^{2n+1}$ with coordinates $\{x_1,...,x_n,y_1,...,y_n,z\}$. The contact form $\eta$ and the Reeb field $V$ are given by $\eta=dz-\frac{1}{2}\sum_{i=1}^{2n}x_idy_i+\frac{1}{2}\sum_{i=1}^{2n}y_idx_i$ and $V=\partial_z$, respectively. Let $X_i=\partial_{x_i}-\frac{1}{2}y_i\partial_z$ and $Y_i=\partial_{y_i}+\frac{1}{2}x_i\partial_z$. The tensor $\mathcal J$ is defined by $\cJ(X_i)=Y_i$, $\cJ(Y_i)=-X_i$, and $\cJ(V)=0$. The Riemannian metric $\left<\cdot,\cdot\right>$ is defined by the condition that $\{\frac{1}{\e}V, X_1,...,X_n,Y_1,...,Y_n\}$ is orthonormal. In this case, we have $\overline\Rm\equiv 0$. Below, we let $b=-\frac{\e|\nabla_H f_0|}{2}$ and $c=\frac{\left<\nabla f_0,V\right>}{2}$.

\begin{thm}\label{conjH}
Assume that $\gamma:[0,1]\to M$ is a minimizing geodesic between its endpoints. Then $|\left<\dot\gamma(0),V(\gamma(0))\right>|\leq 2\pi$.
\end{thm}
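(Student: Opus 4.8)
The plan is to exploit the fact that on the Heisenberg group the Tanaka-Webster curvature vanishes, so that the curvature matrix $R(t)$ reduces to the purely algebraic expression recorded in the final proposition of Section \ref{conjMC}, with $\overline\Rm\equiv 0$ forcing $\bar R(t)=0$. Thus $R$ becomes an explicit constant matrix depending only on $b=-\frac{\e|\nabla_H f_0|}{2}$ and $c=\frac{\left<\dot\gamma(0),V\right>}{2}$, both of which are conserved along the geodesic (since $|\dot\gamma|$ and $\left<V,\dot\gamma\right>$ are constant, as noted earlier). Since $W$ is likewise a constant matrix with entries built from $b$ and $c$, the Jacobi/Riccati setup of Proposition \ref{conjS} becomes a constant-coefficient linear problem. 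I would therefore return to the Jacobi matrix $\cA(t)$ solving $\ddot\cA+2\dot\cA W+\cA W^2+\cA R=0$, which with constant $W,R$ is an explicitly integrable linear ODE.

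The key step is to decouple the system. Inspecting the explicit forms of $W$ and $R$, one sees that the $(2n-2)$-dimensional block spanned by $v_3,\dots,v_{2n}$ carries $W=O_{2n-2}$ and $R$ equal to $c^2 I$, giving the harmless harmonic equation $\ddot\cA + c^2\cA=0$ there, whose first conjugate time is $\pi/|c|$. The essential behaviour lives in the top-left $3\times 3$ block indexed by $v_0,v_1,v_2$, where $W$ and $R$ are the nontrivial constant matrices displayed above. Here I would compute the matrix exponential, or equivalently diagonalize the associated characteristic system $\ddot\cA + (\text{first order in }W)\dot\cA + (W^2+R)\cA=0$; the constant coefficients mean the solution $\cA(t)$ is a combination of sines, cosines, and polynomial-in-$t$ terms with frequencies determined by the eigenvalues of the coefficient matrix. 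The first time $t=\tau$ at which $\det\cA(\tau)=0$ (equivalently $\left<\cF(t)v,v\right>\to-\infty$) is the first conjugate point, and by Proposition \ref{conjS} a minimizing geodesic cannot extend past its first conjugate time.

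I expect the frequencies in the $3\times 3$ block to come out proportional to $|c|=\tfrac12|\left<\dot\gamma(0),V\right>|$, so that the first zero of $\det\cA$ occurs at a time of the form (constant)$/|c|$. Requiring $\tau\ge 1$ for a geodesic minimizing on $[0,1]$ then translates directly into the bound $|\left<\dot\gamma(0),V\right>|\le 2\pi$. The main obstacle, and the step requiring genuine care, is the explicit computation and factorization of the characteristic polynomial of the $3\times 3$ coefficient matrix: one must correctly track the cross terms $bc$ and the combination $c^2-3b^2$, verify that the degenerate/repeated-eigenvalue cases (e.g. $b=0$) are handled, and pin down precisely which oscillatory component produces the earliest sign change of $\det\cA$. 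Getting the constant $2\pi$ right hinges on identifying the correct smallest positive root, so I would double-check it against the known sub-Riemannian conjugate-time computation for the Heisenberg group as a sanity test.
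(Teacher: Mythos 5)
Your proposal is correct and follows essentially the same route as the paper: both explicitly integrate the constant-coefficient Jacobi/Riccati system of Section \ref{conjMC} on the Heisenberg group (the paper works with $\cF=\cA^{-1}\dot\cA+W$ and exhibits $\tr\,\cF_1$ as a logarithmic derivative whose argument factors as $-2\sin(ct)\bigl((b^2+c^2)\sin(ct)-tb^2c\cos(ct)\bigr)$, hence vanishes by time $\pi/c$), locate the first conjugate time at $\pi/|c|$, and conclude from Proposition \ref{conjS} that a minimizer on $[0,1]$ forces $|c|\le\pi$, i.e. $|\left<\dot\gamma(0),V\right>|\le 2\pi$. Tracking $\det\cA$ directly, as you suggest, is the same computation, since $W$ is trace-free and so $\tr\,\cF=\frac{d}{dt}\ln\det\cA$.
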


\begin{proof}
Let $\cF(t)=\left(
                     \begin{array}{cc}
                       \cF_1(t) & \cF_2(t)\\
                       \cF_2(t)^T & \cF_3(t)
                     \end{array}
                   \right)$, where $\cF_1(t)$ is a $3\times 3$ block. A computation shows that
\[
\begin{split}
&\tr \cF_1(t)\\
&=-\frac{(b^2+c^2)(\cos(2ct)-1)+2t^2b^2c^2\cos(2ct)-2tc^3\sin(2ct)}{t(b^2+c^2)(\cos(2ct)-1)+t^2b^2c\sin(2ct)}.
\end{split}
\]
The method of proof is the same as that of $\tr F_1(t)$ which can be found in the proof of Theorem \ref{MCPH}.

Since
\[
\begin{split}
&(b^2+c^2)(\cos(2ct)-1)+tb^2c\sin(2ct)\\
&=-2\sin(ct)((b^2+c^2)\sin(ct)-tb^2c\cos(ct)),
\end{split}
\]
$\tr F_1(t)$ blows up for some $t<1$ if $c>\pi$. Therefore, the result follows from Proposition \ref{conjS}.
\end{proof}

\begin{thm}\label{MCPH}
For each Borel set $U$ in the Heisenberg group,
\[
\begin{split}
&\int_{\varphi_t(U)}d\vol=\int_U\frac{\sin^{2n-2}(c(1-t))}{\sin^{2n-2}(c)}d\vol\\
&+\int_U\frac{(1-t)\sin(c(1-t))[(1-t)b^2c\cos(c(t-t))-(b^2+c^2)]}{\sin(c)[b^2c\cos(c)-(b^2+c^2)]}d\vol\\
&\geq (1-t)^{2n+3}\int_Ud\vol.
\end{split}
\]
\end{thm}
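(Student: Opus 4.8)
The plan is to evaluate the Jacobian appearing in Proposition~\ref{measure} explicitly, exploiting that for the Heisenberg group $\overline\Rm\equiv0$, so that both $W$ and $R$ are \emph{constant} along the flow. By Proposition~\ref{measure} the left-hand side equals $\int_U\det A(t)\,d\vol$, where $A$ solves the linear matrix equation $\ddot A+2\dot A W+A(W^2+R)=0$ with $A(0)=I$ and, since $\varphi_1(x)=x_0$ for all $x$ (so $d\varphi_1=0$), the collapsing terminal condition $A(1)=0$. Because $W$ and $R$ are block diagonal, with a $3\times3$ block in the directions $v_0,v_1,v_2$ and a scalar block $c^2I$ in the remaining $2n-2$ parallel directions, the solution $A$ stays block diagonal and $\det A(t)$ factors as the product of the two block determinants, which I identify with the two factors in the statement.

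First I would dispatch the $(2n-2)$-dimensional block, where $W=0$ and $W^2+R=c^2I$: the block is $\tfrac{\sin(c(1-t))}{\sin c}$ times the identity (the scalar solution of $\ddot g+c^2g=0$ with $g(0)=1$, $g(1)=0$), contributing the factor $\big(\tfrac{\sin(c(1-t))}{\sin c}\big)^{2n-2}$. The substantive computation is the $3\times3$ block: one checks directly that $W_1^2+R_1=\mathrm{diag}(0,0,-4b^2)$, so the characteristic exponents of the block system are a fourfold $0$ together with $\pm 2ci$. Solving the resulting constant-coefficient system and imposing $A_1(0)=I$, $A_1(1)=0$ produces the closed form for $\det A_1(t)$ that appears as the second factor, and multiplying the two blocks gives the displayed identity for $\int_{\varphi_t(U)}d\vol$.

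It remains to prove $\det A(t)\ge(1-t)^{2n+3}$, and here my plan is to reduce to the essential three-dimensional estimate. Since $\gamma$ is minimizing, Theorem~\ref{conjH} (equivalently Proposition~\ref{conjS}) forces $|c|\le\pi$; assuming $c\ge0$ by symmetry, concavity of $\sin$ on $[0,\pi]$ gives $\sin(c(1-t))\ge(1-t)\sin c$, so the $(2n-2)$-block is bounded below by $(1-t)^{2n-2}$. As $\det A_1(t)\ge0$ on $[0,1)$ (no conjugate points before the endpoint), it then suffices to prove the remaining factor satisfies $\det A_1(t)\ge(1-t)^{5}$, which is exactly the measure contraction estimate $MCP(0,5)$ for the three-dimensional Heisenberg group. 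I would establish this from the explicit formula, for instance by showing that $\det A_1(t)^{1/5}$ is concave on $[0,1]$ with endpoint values $1$ and $0$, since concavity then yields $\det A_1(t)^{1/5}\ge1-t$.

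I expect the main obstacle to be precisely this final transcendental inequality for the $3\times3$ block. Its closed form mixes polynomial factors in $(1-t)$ with oscillatory terms of frequency $2c$, and the constraint $c\le\pi$ must be used in an essential way (at $c=\pi$ one sits exactly at a conjugate point, where the denominators degenerate). Verifying the requisite concavity of $\det A_1(t)^{1/5}$—or, alternatively, reducing directly to an elementary inequality in the single variable $c(1-t)$, uniformly in the ratio of $b$ to $c$—is where the real work lies; the block-diagonal solution of the Jacobi system, although laborious, is otherwise mechanical.
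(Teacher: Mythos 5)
Your setup reproduces the paper's: the block-diagonal structure of $W$ and $R$ (your computation $W_1^2+R_1=\mathrm{diag}(0,0,-4b^2)$ and the exponents $0$ (fourfold), $\pm 2ci$ are correct), the factorization of $\det A(t)$ into a $3\times 3$ block and a scalar $(2n-2)$-block, and the explicit solution of the Jacobi system are exactly what the paper does, except that the paper works with the Riccati variable $F=A^{-1}\dot A+W$ (computing $F_1(1-t)=G_1(t)^{-1}$ and $F_3(1-t)=-c\cot(ct)I$ via Levin's method) and recovers $\det A(t)=e^{\int_0^t\tr F(s)\,ds}$, rather than integrating the linear equation directly. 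Your treatment of the $(2n-2)$-block via concavity of $\sin$ on $[0,\pi]$ together with $|c|\le\pi$ from Theorem \ref{conjH} is correct and is equivalent to the paper's inequality $x\cot x\le 1$.

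The genuine gap is the $3\times 3$ block. You reduce everything to $\det A_1(t)\ge(1-t)^5$ and then only name a candidate mechanism --- concavity of $\det A_1(t)^{1/5}$ on $[0,1]$ --- which you explicitly leave unverified; but this is the entire analytic content of the theorem, and the concavity you propose is neither obvious nor established (it is a $CD(0,5)$-flavoured statement, strictly stronger than the endpoint comparison actually needed, and there is no reason offered that it holds uniformly in $b$ and $c\le\pi$). The paper's proof of this step is concrete and different: it writes
\[
\tr F_1(1-t)=-\frac{d}{dt}\ln\bigl(t(b^2+c^2)(\cos(2ct)-1)+t^2b^2c\sin(2ct)\bigr)
\]
and shows the quotient is monotone in the parameter $b^2$ and then in $c$, so that the infimum over admissible $(b,c)$ is attained in the limit $b\to 0$, $c\to 0$, which evaluates to $-5/t$; integrating $\tr F_1(s)\ge -5/(1-s)$ from $0$ to $t$ then gives $\det A_1(t)\ge(1-t)^5$. (This also explains why the exponent is $5$ rather than $3$: the worst case is the degenerate limit, not the generic vanishing order of $\det A_1$ at $t=1$.) Until you either verify your concavity claim or supply an argument of this monotone-in-parameters type, the proof is incomplete at precisely its crucial point.
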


\begin{proof}
Let $F(t)=\left(
                     \begin{array}{cc}
                       F_1(t) & F_2(t)\\
                       F_2(t)^T & F_3(t)
                     \end{array}
                   \right)$, $G(t)=F(1-t)^{-1}$, and $G(t)=\left(
                     \begin{array}{cc}
                       G_1(t) & G_2(t)\\
                       G_2(t)^T & G_3(t)
                     \end{array}
                   \right)$, where $F_1(t)$ and $G_1(t)$ are $3\times 3$ blocks.

It follows that $G(0)=0$ and
\[
\begin{split}
&\dot G_1(t)=-G_1(t)R_1(1-t)G_1(t)-I\\
&-G_2(t)R_3(1-t)G_2(t)^T-W_1G_1(t)-G_1(t)W_1^T\\
&\dot G_2(t)=-G_1(t)R_1(1-t)G_2(t)-G_2(t)R_3(1-t)G_3(t)-W_1G_2(t)\\
&\dot G_3(t)=-G_2(t)^TR_1(1-t)G_2(t)-G_3(t)R_3(1-t)G_3(t)-I.
\end{split}
\]
Therefore, $G_2\equiv 0$ and
\[
\begin{split}
&\dot G_1(t)=-G_1(t)R_1(1-t)G_1(t)-I-W_1G_1(t)-G_1(t)W_1^T\\
&\dot G_3(t)=-G_3(t)R_3(1-t)G_3(t)-I.
\end{split}
\]

It follows that $F_2\equiv 0$. A computation using the method in \cite{Le} shows that
\begin{equation}\label{F1}
\begin{split}
&F_1(1-t)=G_1(t)^{-1}\\
&=\frac{1}{K_1(t)}\left(
                     \begin{array}{ccc}
                       \frac{c^2}{t} & \frac{bK_2(t)}{t} & b^3K_2(t) \\
                       \frac{bK_2(t)}{t} & \frac{-b^2K_2(t)+c^3t\cot(tc)}{t} & cb^2K_2(t) \\
                       b^3K_2(t) & cb^2K_2(t) & tb^2c^2-c\cot(tc)K_1(t) \\
                     \end{array}
                   \right),
\end{split}
\end{equation}
and
\begin{equation}\label{F3}
F_3(1-t)=G_3(t)^{-1}=-c\cot(ct)I,
\end{equation}
where
\[
\begin{split}
&K_1(t)=tcb^2\cot(tc)-b^2-c^2=b^2K_2(t)-c^2,\\
&K_2(t)=tc\cot(tc)-1.
\end{split}
\]

The following two inequalities and Proposition \ref{measure} give the result:
\begin{equation}\label{ineq1}
\begin{split}
&\tr F_1(1-t)\\
&=-\frac{(b^2+c^2)(\cos(2ct)-1)+2t^2b^2c^2\cos(2ct)-2tc^3\sin(2ct)}{t(b^2+c^2)(\cos(2ct)-1)+t^2b^2c\sin(2ct)}\\
&=-\frac{d}{dt}\ln\left(t(b^2+c^2)(\cos(2ct)-1)+t^2b^2c\sin(2ct)\right)\geq -\frac{5}{t}
\end{split}
\end{equation}
and
\begin{equation}\label{ineq2}
\begin{split}
\tr F_3(1-t)&=-(2n-2)c\cot(ct)\\
&=-\frac{d}{dt}\ln\sin^{2n-2}(ct)\geq -\frac{2n-2}{t}.
\end{split}
\end{equation}

The inequality (\ref{ineq2}) follows from $x\cot(x)\leq 1$ for all $x$ in the open interval $(-\pi,\pi)$ and Theorem \ref{conjH}.

For (\ref{ineq1}), we first minimize over $b$ and then over $c$ (using again Theorem \ref{conjH}) to obtain
\[
\begin{split}
&\tr F_1(1-t)\\&\geq-\lim_{b\to 0}\frac{(b^2+c^2)(\cos(2ct)-1)+2t^2b^2c^2\cos(2ct)-2tc^3\sin(2ct)}{t(b^2+c^2)(\cos(2ct)-1)+t^2b^2c\sin(2ct)}\\
&=-\frac{(\cos(2ct)-1)+2t^2c^2\cos(2ct)}{t(\cos(2ct)-1)+t^2c\sin(2ct)}\\
&\geq-\lim_{c\to 0}\frac{(\cos(2ct)-1)+2t^2c^2\cos(2ct)}{t(\cos(2ct)-1)+t^2c\sin(2ct)}=-\frac{5}{t}.
\end{split}
\]
\end{proof}

\smallskip

\section{Proof of Theorem \ref{main}}\label{proofmain}

Let $\tilde F_1(t)$ be the matrix defined by (\ref{F1}) and let
\[
\tilde f_3(t)=-(2n-2)c\cot(ct).
\]
Then
\[
\begin{split}
&\dot{\tilde F}_1(t)=-\tilde R_1(t)-\tilde F_1(t)^2-\tilde F_1(t)W_1-W_1^T\tilde F_1(t)\\
&\dot{\tilde f}_3(t)=-(2n-2)c^2-\frac{1}{2n-2}\tilde f_3(t)^2,
\end{split}
\]
respectively. Moreover, we also have $\tilde F_1^{-1}\to 0$ and $\frac{1}{\tilde f_3}\to 0$ as $t\to 1$.

Since $\bar R_1(t)\geq 0$, it follows that
\[
\begin{split}
&\frac{d}{dt}\left(F_1(t-\e)-{\tilde F}_1(t)\right)=\tilde F_1(t)^2-F_1(t-\e)^2+(\tilde F_1(t)-F_1(t-\e))W_1\\
&+W_1^T(\tilde F_1(t)-F_1(t-\e))-\bar R_1(t)-F_2(t-\e)F_2(t-\e)^T\\
&\leq(\tilde F_1(t)-F_1(t-\e))(W_1+\tilde F_1(t))+\left(W_1^T+F_1(t-\e)\right)(\tilde F_1(t)-F_1(t-\e)).
\end{split}
\]
Note also that $F_1(t-\e)\geq\tilde F_1(t)$ for all $t$ close enough to $1$. It follows from this and \cite[Proposition 1]{Ro} that
$F_1(t-\e)\geq\tilde F_1(t)$ for all $t\geq \e$. By letting $\e\to 0$, we obtain $F_1(t)\geq \tilde F_1(t)$ for all $t$ in $[0,1]$. Therefore, by (\ref{ineq1}),
\[
\tr F_1(t)\geq\tr \tilde F_1(t)\geq -\frac{5}{1-t}.
\]

Similarly, by using $\tr\bar R_3(t)\geq 0$, we also have
\[
\begin{split}
\frac{d}{dt}\tr F_3(t)&=-\tr R_3(t)-|F_3(t)|^2-\tr(F_2(t)^TF_2(t))\\
&\leq -(2n-2)c^2-\frac{1}{2n-2}(\tr F_3(t))^2.
\end{split}
\]

An argument as above shows that
\[
\tr F_3(t)\geq\tilde f_3(t)\geq-\frac{2n-2}{1-t}.
\]

Finally, the result follows from Proposition \ref{measure} and the above estimates on $\tr F_1(t)$ and $\tr F_3(t)$.

\smallskip

\section{Appendix}

In this appendix, we give the proofs of Proposition \ref{ind} and \ref{Rm}. They are very mild modification of the corresponding ones in the Sasakian case (see \cite{Bl}). First, we prove the following result for more general weakly Sasakian manifolds.

\begin{prop}
Assume that the structure $(J,V,\eta,\left<\cdot,\cdot\right>)$ is weakly Sasakian. Let $X_1$ and $X_2$ be vector fields contained in the distribution $\ker\eta$. Then
\begin{enumerate}
\item $\LD_V\cJ=0$,
\item $\LD_Vg(X_1,\cdot)=0$,
\item $\left<\nabla_{X_1}V,X_2\right>=-\left<\nabla_{X_2}V,X_1\right>$,
\item $\nabla_VV=\frac{\LD_V|V|^2}{2|V|^2}V+\frac{1}{2}J^2\nabla |V|^2$,
\item $\left<X_1,\cJ X_2\right>=\nabla_{X_1}\eta(X_2)-\nabla_{X_2}\eta(X_1)$,
\item $\nabla_{X_1}J(X_2)=\frac{\left<X_2,\cJ \nabla_{X_1}V\right>}{|V|^2}V$,
\item $\nabla_X\cJ(V)=-\cJ\nabla_XV$,
\item $\nabla_V\cJ(X)=\nabla_{\cJ X}V-\cJ \nabla_{X}V$,
\item $\nabla_V\cJ(V)=\frac{1}{2}\cJ\nabla|V|^2$.
\end{enumerate}
\end{prop}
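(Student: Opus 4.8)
The plan is to prove the nine identities in a dependency order that isolates the one place where the weakly Sasakian (normality) hypothesis (\ref{Nij}) does real work. Identities (4), (5), (7) are \emph{soft}: they use only the Koszul formula, the torsion-freeness and metric-compatibility of $\nabla$, and the defining relations $\cJ V=0$, $\cJ^2=-\mathrm{Id}+\eta\otimes V$, $d\eta(X,Y)=\langle X,\cJ Y\rangle$, and $V\perp\ker\eta$. Identities (1), (2), (3) form a short chain powered by (\ref{Nij}) together with $\LD_V\eta=0$ and $\LD_V d\eta=0$. Identities (8) and (9) are then immediate corollaries, and (6) --- the only genuinely substantial statement --- splits into an elementary vertical part and a horizontal part that is the main obstacle.

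First I would dispose of the soft identities. For (5), since $\nabla$ is torsion-free one has $d\eta(X_1,X_2)=(\nabla_{X_1}\eta)(X_2)-(\nabla_{X_2}\eta)(X_1)$, and (\ref{conmet}) rewrites the left-hand side as $\langle X_1,\cJ X_2\rangle$. For (7), differentiating $\cJ V=0$ gives $(\nabla_X\cJ)V=\nabla_X(\cJ V)-\cJ\nabla_X V=-\cJ\nabla_X V$ with no hypothesis at all. For (4), I would compute $\nabla_V V$ componentwise: pairing with $V$ gives $\langle\nabla_V V,V\rangle=\tfrac12 V|V|^2=\tfrac12\LD_V|V|^2$, so the vertical part is $\tfrac{\LD_V|V|^2}{2|V|^2}V$; pairing with a horizontal $X$ and using the Koszul formula together with $\langle V,X\rangle=0$ and $\eta([V,X])=0$ (the latter from $d\eta(V,\cdot)=0$) yields $\langle\nabla_V V,X\rangle=-\tfrac12 X|V|^2$, i.e. the horizontal part is $-\tfrac12(\nabla|V|^2)_\hor=\tfrac12\cJ^2\nabla|V|^2$, since $\cJ^2=-\mathrm{Id}+\eta\otimes V$ acts as minus the horizontal projection.

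Next the normality chain. For (1), I would set $Y_1=V$ in (\ref{Nij}); as $d\eta(V,\cdot)=0$ and $\cJ V=0$, the right-hand side collapses to $-\cJ(\LD_V\cJ)(Y)$, so $\cJ(\LD_V\cJ)=0$. A separate check shows $(\LD_V\cJ)(V)=0$ and that $(\LD_V\cJ)(Y)\in\ker\eta$ (using $\eta([V,\cJ Y])=0$ and $\eta\circ\cJ=0$); applying $\cJ$ once more and $\cJ^2=-\mathrm{Id}$ on $\ker\eta$ then forces $\LD_V\cJ=0$. For (2), Cartan's formula gives $\LD_V\eta=0$ and $\LD_V d\eta=0$; applying $\LD_V$ to the tensor identity $d\eta=\langle\cdot,\cJ\cdot\rangle$ and using (1) to kill the $\LD_V\cJ$ term leaves $(\LD_V g)(\cdot,\cJ\cdot)=0$, whence $(\LD_V g)(X_1,\cdot)=0$ after replacing the second argument by $-\cJ(\cdot)$. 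For (3), the Koszul formula shows the symmetric part of $(X_1,X_2)\mapsto\langle\nabla_{X_1}V,X_2\rangle$ equals $(\LD_V g)(X_1,X_2)$, which vanishes by (2); hence that expression is antisymmetric. Then (8) follows by rewriting $(\LD_V\cJ)(X)=0$ via $[A,B]=\nabla_A B-\nabla_B A$, and (9) follows from (7) with $X=V$ together with (4) and the identity $\cJ^3=-\cJ$ (so the $V$-term of $\nabla_V V$ drops and $\cJ^2\nabla|V|^2$ is rotated to $-\cJ\nabla|V|^2$).

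The heart of the matter is (6). Its vertical part is elementary: $\langle(\nabla_{X_1}\cJ)X_2,V\rangle=-\langle\cJ X_2,\nabla_{X_1}V\rangle=\langle X_2,\cJ\nabla_{X_1}V\rangle$, using the skew-symmetry $\langle\cJ A,B\rangle=-\langle A,\cJ B\rangle$ and $\cJ X_2\in\ker\eta\perp V$, which gives exactly the claimed coefficient. The obstacle is to show the horizontal part vanishes, i.e. $B(X_1,X_2,X_3):=\langle(\nabla_{X_1}\cJ)X_2,X_3\rangle=0$ for all horizontal $X_i$. Here I would record three structural facts: $B$ is skew in its last two arguments (differentiate $\langle\cJ X_2,X_3\rangle=-\langle X_2,\cJ X_3\rangle$); the cyclic sum of $B$ vanishes, because the fundamental form $\Phi(\cdot,\cdot)=\langle\cdot,\cJ\cdot\rangle$ coincides with $d\eta$ (extended using $\cJ V=0$ and $V\perp\ker\eta$) and is therefore closed; and pairing the normality identity (\ref{Nij}) with a horizontal $X_1$ yields a further relation among the terms $B(\cJ\cdot,\cdot,\cdot)$ and $B(\cdot,\cdot,\cJ\cdot)$, the $V$-component of $N_{\cJ}$ dropping out since $\langle V,\cJ X_1\rangle=0$. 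Combining these three relations by the same linear juggling that shows a normal structure with closed fundamental form is Kähler/Sasakian (as in \cite{Bl}) forces $B\equiv 0$. I expect this last step --- extracting $B=0$ from the cyclic, skew, and normality relations while carrying the $|V|^2$ factors --- to be the only place requiring genuine care; everything else is bookkeeping against the defining relations of the weakly Sasakian structure.
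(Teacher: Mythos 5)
Your proposal is correct and follows essentially the same route as the paper: item (1) via (\ref{Nij}) with $Y_1=V$ plus Cartan's formula for the vertical part, item (2) by Lie-differentiating (\ref{conmet}), items (3)--(5) from Koszul/torsion-freeness, and the crucial item (6) by combining the closedness of $d\eta$ (the cyclic identity) with the normality relation to kill the horizontal part of $\nabla\cJ$, exactly as in the paper's (\ref{est1})--(\ref{est2}). The only difference is presentational: you leave the final linear combination in (6) at the level of ``the standard Blair argument,'' which is precisely the computation the paper writes out.
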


\begin{proof}
By (\ref{Nij}), we have
\[
\cJ^2[V,X]=\cJ[V,\cJ X].
\]
It follows that $\cJ(\LD_V\cJ)X=0$ and the horizontal part of $(\LD_V\cJ)X$ vanishes for any $X$. Since $\eta\circ \cJ=0$, we have, by Cartan's formula, the following for the vertical part
\[
\eta((\LD_V\cJ)X)=-\LD_V\eta(\cJ X)=0.
\]
The first assertion follows.

By the first assertion and (\ref{conmet}), we have
\[
\LD_Vg(X_1,\cJ X_2)=\LD_V(d\eta)(X_1,X_2)=0.
\]
The second assertion follows.

The third and the fourth assertions follow from Koszul's formula. Assertion five follows from (\ref{conmet}).

By (\ref{Nij}), we have
\[
\begin{split}
&d\eta(X_1,X_2)V\\
&=-\cJ^2[X_1,X_2]+\cJ[\cJ X_1,X_2]+\cJ[X_1,\cJ X_2]-[\cJ X_1,\cJ X_2]\\
&=-\cJ^2\nabla_{X_1}X_2+\cJ^2\nabla_{X_2}X_1+\cJ\nabla_{\cJ X_1}X_2\\
&-\cJ\nabla_{X_2}(\cJ X_1)+\cJ\nabla_{X_1}(\cJ X_2) -\cJ\nabla_{\cJ X_2}X_1-\nabla_{\cJ X_1}(JX_2)+\nabla_{\cJ X_2}(\cJ X_1).
\end{split}
\]

Since $X_1$, $X_2$, and $X_3$ are in $\ker\eta$, it follows that
\[
\begin{split}
&\left<\nabla_{X_1}X_2-\nabla_{X_2}X_1-\nabla_{\cJ X_1}(\cJ X_2)+\nabla_{\cJ X_2}(\cJ X_1),X_3\right>\\
&=\left<\nabla_{\cJ X_1}X_2-\nabla_{X_2}(\cJ X_1)+\nabla_{X_1}(\cJ X_2) -\nabla_{\cJ X_2}X_1,\cJ X_3\right>\\
\end{split}
\]
In other words,
\[
\begin{split}
&\left<-(\nabla_{\cJ X_1}\cJ)X_2+(\nabla_{\cJ X_2}\cJ)X_1,X_3\right>=\left<-(\nabla_{X_2}\cJ)X_1+(\nabla_{X_1}\cJ)X_2,\cJ X_3\right>
\end{split}
\]

It follows that
\begin{equation}\label{est1}
\begin{split}
&\left<-(\nabla_{\cJ X_1}\cJ)X_2+(\nabla_{\cJ X_2}\cJ)X_1,\cJ X_3\right>\\
&=\left<(\nabla_{X_2}\cJ)X_1-(\nabla_{X_1}\cJ)X_2,X_3\right>.
\end{split}
\end{equation}

On the other hand, we have, by taking exterior derivative of $d\eta$, the following
\begin{equation}\label{est2}
\begin{split}
&\left<X_3,(\nabla_{X_2}\cJ)X_1\right>+\left<X_1,(\nabla_{X_3}\cJ)X_2\right>-\left<X_3,(\nabla_{X_1}\cJ)X_2\right>\\
&=\left<X_3,(\nabla_{X_2}\cJ)X_1\right>+\left<X_1,(\nabla_{X_3}\cJ)X_2\right>+\left<X_2,(\nabla_{X_1}\cJ)X_3\right>=0.
\end{split}
\end{equation}
By combining this with (\ref{est1}), we obtain
\[
\begin{split}
\left<X_1,(\nabla_{X_3}\cJ)X_2\right>&=\left<X_3,(\nabla_{X_1}\cJ)X_2\right>-\left<X_3,(\nabla_{X_2}\cJ)X_1\right>\\
&=\left<(\nabla_{\cJ X_1}\cJ)X_2-(\nabla_{\cJ X_2}\cJ)X_1,\cJ X_3\right>.
\end{split}
\]

By (\ref{est2}), we also have
\[
\left<X_1,(\nabla_{X_3}\cJ)X_2\right>=\left<\cJ X_3,(\nabla_{\cJ X_2}\cJ)X_1-(\nabla_{\cJ X_1}\cJ)X_2\right>.
\]
It follows that $\left<X_1,(\nabla_{X_3}\cJ)X_2\right>=0$.

A calculation shows that $\nabla_{X_1}\cJ(X_2)=\frac{\left<X_2,\cJ\nabla_{X_1}V\right>}{|V|^2}V$ for all tangent vectors $X_1$ and $X_2$ in $\ker\eta$. The sixth assertion follows. A similar calculation gives the seventh assertion. Using the formula at the beginning of this proof, we obtain
\[
\nabla_V\cJ(X)=\nabla_{\cJ X}V-\cJ\nabla_{X}V
\]
which is the eighth assertion. The last assertion follows from $\cJ V=0$.
\end{proof}

\begin{proof}[Proof of Proposition \ref{ind}]
The first nine assertions follows from the above Proposition. By Koszul's formula, $\left<\nabla_{X_1}X_2,X_3\right>$ is independent of $\e$ if $X_i$ are contained in $\ker\eta$. It follows that
\[
\begin{split}
\nabla_{X_1}X_2&=(\nabla_{X_1}X_2)_\hor-\frac{1}{\e^2}\left<X_2,\nabla_{X_1}V\right>V\\
&=(\nabla_{X_1}X_2)_\hor+\frac{1}{2}\left<X_2,\cJ X_1\right>V.
\end{split}
\]
This also gives $\nabla_YV=-\frac{\epsilon^2}{2}\cJ Y$ for all $Y$. Finally,
\[
\begin{split}
\left<\nabla_V X_1,X_2\right>&=\left<[V,X],X_2\right>-\frac{1}{2}\left<[X,X_2],V\right>\\
&=\left<[V,X],X_2\right>-\frac{\e^2}{2}\eta([X_1,X_2])\\
&=\left<[V,X],X_2\right>+\frac{\e^2}{2}d\eta(X_1,X_2)\\
&=\left<[V,X],X_2\right>-\frac{\e^2}{2}\left<\cJ X_1,X_2\right>
\end{split}
\]
for all sections $X_1$ and $X_2$ of $\ker\eta$. Therefore, the last assertion follows.

\end{proof}

\begin{proof}[Proof of Proposition \ref{Rm}]
By Proposition \ref{ind}, we have
\[
\begin{split}
&\nabla_{Y_1}\nabla_{Y_2}V=-\frac{\e^2}{2}\nabla_{Y_1}(\cJ Y_2)=-\frac{\e^2}{2}(\nabla_{Y_1}\cJ)Y_2-\frac{\e^2}{2}\cJ\nabla_{Y_1}Y_2\\
&=-\frac{\e^2}{2}(\nabla_{(Y_1)_H}\cJ)\left((Y_2)_\hor+\frac{\left<Y_2,V\right>}{\e^2}V\right)-\frac{\e^2}{2}\cJ\nabla_{Y_1}Y_2\\
&=-\frac{\e^2}{2}(\nabla_{(Y_1)_H}\cJ)(Y_2)_\hor-\frac{\left<Y_2,V\right>}{2}(\nabla_{(Y_1)_H}\cJ)V -\frac{\e^2}{2}\cJ\nabla_{Y_1}Y_2\\
&=-\frac{\e^2}{4}\left<Y_1,Y_2\right>V+\frac{\e^2\left<Y_2,V\right>}{4}(Y_1)_\hor -\frac{\e^2}{2}\cJ\nabla_{Y_1}Y_2.
\end{split}
\]
Therefore,
\[
\begin{split}
\Rm(Y_1,Y_2)V&=\nabla_{Y_1}\nabla_{Y_2}V-\nabla_{Y_2}\nabla_{Y_1}V-\nabla_{[Y_1,Y_2]}V\\
&=\frac{\e^2\left<Y_2,V\right>}{4}(Y_1)_\hor-\frac{\e^2\left<Y_1,V\right>}{4}(Y_2)_\hor.
\end{split}
\]
This is the first assertion.

If $X_1$ and $X_2$ are in $\ker\eta$, then
\[
\bar\nabla_{X_2}X_1=(\nabla_{X_2}X_1)_\hor=\nabla_{X_2}X_1-\frac{\left<\cJ X_2,X_1\right>}{2}V.
\]

It follows that
\[
\begin{split}
&\bar\nabla_{X_3}\bar\nabla_{X_2}X_1=\nabla_{X_3}\bar\nabla_{X_2}X_1-\frac{\left<\bar\nabla_{X_2}X_1,\cJ X_3\right>}{2}V\\
&=\nabla_{X_3}\left(\nabla_{X_2}X_1-\frac{\left<\cJ X_2,X_1\right>}{2}V\right)-\frac{\left<\nabla_{X_2}X_1,\cJ X_3\right>}{2}V\\
&=\nabla_{X_3}\nabla_{X_2}X_1+\frac{\e^2\left<\cJ X_2,X_1\right>}{4}\cJ X_3 -\frac{\left<\cJ\nabla_{X_3}X_2,X_1\right>}{2}V\\
&-\frac{\left<\cJ X_2,\nabla_{X_3}X_1\right>}{2}V -\frac{\left<\nabla_{X_2}X_1,\cJ X_3\right>}{2}V
\end{split}
\]
and
\[
\bar\nabla_{[X_2,X_3]}X_1=\nabla_{[X_2,X_3]}X_1-\frac{1}{2}\left<\cJ[X_2,X_3],X_1\right>V+\frac{\left<V,[X_2,X_3]\right>}{2}\cJ X_1.
\]

Therefore,
\[
\begin{split}
\overline{\Rm}(X_2,X_3)X_1&=\Rm(X_2,X_3)X_1+\frac{\e^2\left<\cJ X_3,X_1\right>}{4}JX_2 \\
&-\frac{\e^2\left<\cJ X_2,X_1\right>}{4}\cJ X_3-\frac{\e^2\left<\cJ X_2,X_3\right>}{2}JX_1.
\end{split}
\]
This gives the second assertion. The proofs of the remaining claims follow in a similar manner and are omitted.
\end{proof}

\smallskip


\begin{thebibliography}{100}
\bibitem{AgLe} A. Agrachev, P.W.Y. Lee: Generalized Ricci curvature bounds for three dimensional contact subriemannian manifolds. Math. Ann. 360 (2014), no. 1-2, 209-253.
\bibitem{BaEm} D. Bakry, M. \'Emery: Diffusions hypercontractives, in S\'eminaire de probabilit\'es, XIX, 1983/84, vol. 1123, Springer, Berlin, 1985, pp. 177-206.
\bibitem{Bl} D.E. Blair: Contact Manifolds in Riemannian Geometry, Lecture Notes in Mathematics, 509, 146pp.
\bibitem{CoHoSa} T. Coulhon, I. Holopainen, L. Saloff-Coste: Harnack inequality and hyperbolicity for subelliptic p-Laplacians with applications to Picard type theorems. Geom. Funct. Anal. 11 (2001), no. 6, 1139-1191.
\bibitem{FoSt} G. B. Folland, E. M. Stein: Hardy spaces on homogeneous groups, Mathematical Notes, 28. Princeton University Press, Princeton, N.J.; University of Tokyo Press, Tokyo, 1982. xii+285 pp.
\bibitem{Gr} A. A. Grigor'yan: The heat equation on noncompact Riemannian manifolds, (Russian) Mat. Sb. 182 (1991), no. 1, 55-87; translation in Math. USSR-Sb. 72 (1992), no. 1, 47-77.
\bibitem{Je} D. Jerison, The Poinca\'re inequality for vector fields satisfying the H\"ormander condition, Duke Math. J. 53 (1986), 503-523.
\bibitem{Ju} N. Juillet: Geometric Inequalities and Generalized Ricci Bounds in the Heisenberg Group,  Int. Math. Res. Not. IMRN 2009, no. 13, 2347-2373.
\bibitem{KuSt} S. Kusuoka, D. Stroock: Applications of the Malliavin calculus. III., J. Fac. Sci. Univ. Tokyo Sect. IA Math. 34 (1987), no. 2, 391-442.
\bibitem{LeLiZe} P.W.Y. Lee, C. Li, I. Zelenko: Ricci curvature type lower bounds for sub-Riemannian structures on Sasakian manifolds, preprint, 2013, arXiv:1304.2658, 25 pp.
\bibitem{Le1} P.W.Y. Lee: Generalized Li-Yau estimates and Huisken's monotonicity formula, arXiv: 1211.5559 (2013), 25pp, submitted for publication.
\bibitem{Le2} P.W.Y. Lee: Differential Harnack inequalities for a family of sub-elliptic diffusion equations on Sasakian manifolds, arXiv: 1302.3315 (2013), 27pp, submitted for publication.
\bibitem{Le} J.J. Levin: On the matrix Riccati equation.  Proc. Amer. Math. Soc.  10  (1959) 519--524.
\bibitem{LoVi1} J. Lott, C. Villani: Ricci curvature for metric-measure spaces via optimal transport. Ann. of Math. (2) 169 (2009), no. 3, 903-991.
\bibitem{Mo0} J. Moser: On Harnack's theorem for elliptic differential equations. Comm. Pure Appl. Math. 14 (1961), 577-591.
\bibitem{Mo1} J. Moser: A Harnack inequality for parabolic differential equations. Comm. Pure Appl. Math. 17 (1964), 101-134.
\bibitem{Mo2} J. Moser: On pointwise estimate for parabolic differential equations. Comm Pure Appl. Math. 24 (1971), 727-740.
\bibitem{Oh1} S. Ohta: On the measure contraction property of metric measure spaces.  Comment. Math. Helv.  82  (2007),  no. 4, 805-828.
\bibitem{Ol} Y. Ollivier: Ricci curvature of Markov chains on metric spaces. J. Funct. Anal. 256 (2009), no. 3, 810-864.
\bibitem{Ri} L. Rifford: Ricci curvatures in Carnot groups. Math. Control Relat. Fields, 3(4): 467-487, 2013.
\bibitem{Ro} H.L. Royden: Comparison theorems for the matrix Riccati equation. Comm. Pure Appl. Math. 41 (1988), no. 5, 739-746.
\bibitem{Sa1} L. Saloff-Coste: A note on Poincar\'e, Sobolev, and Harnack inequalities, Internat. Math. Res. Notices 1992, no. 2, 27-38.
\bibitem{St1} K.T. Sturm: On the geometry of metric measure spaces. Acta Math. 196, no.1, 65-131 (2006)
\bibitem{St2} K.T. Sturm: On the geometry of metric measure spaces II. Acta Math. 196, no. 1, 133-177 (2006)
\bibitem{Vi} C. Villani:  Optimal transport. Old and new. Grundlehren der Mathematischen Wissenschaften [Fundamental Principles of Mathematical Sciences], 338. Springer-Verlag, Berlin, 2009, 973 pp.
\end{thebibliography}
\end{document}